\documentclass[a4paper,11pt]{amsart}

\usepackage{amsthm, amsmath, latexsym, amsfonts, epsfig}
\usepackage[all]{xy}
\usepackage{amssymb}
\usepackage{mathrsfs}
\input xy
\xyoption{all}
\linespread{1.1}

\setlength{\textheight}{21cm}

\newcounter{teorema}
\newtheorem{teo}[teorema]{Theorem}
\newtheorem{Lemma}{Lemma}[section]
\newtheorem{Proposition}[Lemma]{Proposition}
\newtheorem{Theorem}[Lemma]{Theorem}
\newtheorem{Remark}[Lemma]{{Remark}}
\newtheorem{Corollary}[Lemma]{Corollary}
\newtheorem{Definition}[Lemma]{Definition}

\newtheorem{Example}[Lemma]{Example}
\newtheorem{Notation}[Lemma]{Notation}
\theoremstyle{remark}

\newcommand{\bP}{\mathbb{P}}

\title{Notes on projective normality of reducible curves}

\author{Edoardo Ballico}
\address{Dept. of Mathematics\\
 University of Trento\\
38123 Povo (TN), Italy}
\email{ballico@science.unitn.it}
\thanks{The author was partially supported by MIUR and GNSAGA of INdAM (Italy).}

\author{Silvia Brannetti}
\address{Dept. of Mathematics\\
University of Roma Tre\\
Largo S. Leonardo Murialdo 1,
00146 Rome, Italy}
\email{brannett@mat.uniroma3.it}

\subjclass{14H10}

\begin{document}

\begin{abstract}
We give some results on quadratic normality of reducible curves canonically embedded and partially extend this study to their projective normality.
\end{abstract}

\maketitle

\section*{Introduction}

Let $C$ be a smooth curve of genus $g$ over an algebraically closed field $k$. The canonical bundle $\omega_C$ induces an embedding of $C$ in $\bP^{g-1}$ if and only if $C$ is not hyperelliptic; we indicate the power $\omega_C^{\otimes n}$ by $\omega_C^n$ for any $n\in\mathbb N$. One says that $C$ is \textit{projectively normal} if the maps
\begin{equation}\label{projnorm}
H^0(\bP^{g-1},\mathcal O_{\bP^{g-1}}(k))\rightarrow H^0(C,\omega_C^k)
\end{equation}
are surjective for every $k\geq 1$.
In other words,
$C$ is projectively normal if and only if the hypersurfaces of degree $k$ in $\bP^{g-1}$ cut a complete linear series on $C$ for any $k$. If $k=1$ and the map (\ref{projnorm}) is surjective, we say that $C$ is \textit{linearly normal}, which means that the curve is embedded via a complete linear series. If $\omega _C$ is ample, then an equivalent formulation states that $C$ is projectively normal if the maps
\begin{equation}\label{projnorm2}
{\rm Sym}^k H^0(C,\omega_C)\rightarrow H^0(C,\omega_C^k)
\end{equation}
are surjective for every $k\geq 1$, because the surjectivity of all these maps when $\omega _C$ is ample implies the very ampleness of $\omega _C$.

If $C$ is a smooth, non-hyperelliptic curve, Castelnuovo and Noether proved that its canonical model is projectively normal (see \cite{ACGH}). When we deal with singular curves, though, the problem becomes harder: for integral curves, in \cite{Kleiman} the authors generalize Castelnuovo's approach proving that linear normality is equivalent to projective normality. For reducible curves yet not much is known: properties of the canonical map for Gorenstein curves, i.e. the map induced by the dualising sheaf, are investigated in \cite{cfhr}, whereas in \cite{Franciosi2} the author gives a sufficient condition for line bundles on non-reduced curves to be \textit{normally generated} (see \ref{normGen}). The projective normality of reducible curves is studied in \cite{Schreyer}; more in general, since the problem of studying projective normality reduces to the study of multiplication maps, we refer to \cite{Ballico} and \cite{Franciosi2} for these items.

In this paper we investigate the projective normality of reducible curves restricting the problem to suitable subcurves. The first step is to study the \textit{quadratic normality}, i.e. the surjectivity of the maps in (\ref{projnorm}) for $k=2$. Let $X$ be a connected, reduced and Gorenstein projective curve of genus $g$ with $\omega_X$ very ample. Assume that $X$ has planar singularities at the points lying on at least two irreducible components. Our main result about quadratic normality is the following theorem.

\begin{teo}\label{teorema}
Let $X$ be a curve as above, and set $X=A\cup B$ with $A,B$ connected subcurves being smooth at $D:=A\cap B$. If $A\neq\emptyset$ and the map $$\mu_{\omega_A,{\omega_X}|_A}:H^0(A,\omega_A)\otimes H^0(A,{\omega_X}|_A)\rightarrow H^0(X,\omega_A\otimes {\omega_X}|_A)$$ is surjective, then $X$ is quadratically normal.
\end{teo}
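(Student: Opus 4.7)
The plan is to prove surjectivity of $\mu_X\colon H^0(X,\omega_X)^{\otimes 2}\to H^0(X,\omega_X^2)$ by splitting $H^0(X,\omega_X^2)$ via the restriction short exact sequence to $B\subset X$, and handling the kernel (supported on $A$, where the hypothesis applies) and the quotient (living on $B$) separately.

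Because $X$ has planar singularities along $D$ and both $A,B$ are smooth at $D$, each subcurve is Cartier in $X$; \'etale-locally at $p\in D$, $\O_{X,p}$ looks like $k[x,y]/(xy)$ with $A=\{y=0\}$ and $B=\{x=0\}$, so $\mathcal{I}_{B/X}\cong \iota_{A*}\O_A(-D)$ and by adjunction $\omega_X|_A\cong \omega_A(D)$. Tensoring $0\to\mathcal{I}_{B/X}\to\O_X\to\O_B\to 0$ with $\omega_X^2$ and identifying $\omega_X^2\otimes\mathcal{I}_{B/X}\cong\omega_A\otimes\omega_X|_A$ produces
$$0\to \omega_A\otimes\omega_X|_A \to \omega_X^2 \to \omega_X^2|_B \to 0.$$
Since $\omega_X|_A$ is very ample on the connected curve $A$, $(\omega_X|_A)^{-1}$ has no global sections, so Serre duality gives $H^1(A,\omega_A\otimes\omega_X|_A)=0$ and the cohomology sequence reduces to
$$0\to H^0(A,\omega_A\otimes\omega_X|_A)\to H^0(X,\omega_X^2)\to H^0(B,\omega_X^2|_B)\to 0.$$
It thus suffices to show that $\Ima(\mu_X)$ contains this kernel and surjects onto this quotient.

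For the kernel, I would first combine the arithmetic-genus relation $g=p_a(A)+p_a(B)+|D|-1$ with the analogous sequence $0\to H^0(B,\omega_B)\to H^0(X,\omega_X)\to H^0(A,\omega_X|_A)\to H^1(B,\omega_B)=k$ to conclude by a dimension count that the connecting map vanishes, so that $H^0(X,\omega_X)\twoheadrightarrow H^0(A,\omega_X|_A)$ is surjective. Then any $a\in H^0(A,\omega_A)$ extends by zero on $B$ to a section of $\omega_X$ on $X$, any $b\in H^0(A,\omega_X|_A)$ lifts to some $\tilde b\in H^0(X,\omega_X)$, and $a\tilde b\in H^0(X,\omega_X^2)$ vanishes on $B$ and restricts to $ab$ on $A$. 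The hypothesis on $\mu_{\omega_A,\omega_X|_A}$ says precisely that such products span $H^0(A,\omega_A\otimes\omega_X|_A)$, so this kernel sits inside $\Ima(\mu_X)$.

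The \emph{main obstacle} is the quotient part. By the symmetric dimension argument one also has $H^0(X,\omega_X)\twoheadrightarrow H^0(B,\omega_X|_B)$, and so the image of $\mu_X$ inside $H^0(B,\omega_X^2|_B)$ coincides with $\Ima(\mu_B)$ for the multiplication $\mu_B\colon H^0(B,\omega_X|_B)^{\otimes 2}\to H^0(B,\omega_X^2|_B)$. The hypothesis imposes no direct condition on $B$, so the remaining task is to deduce surjectivity of $\mu_B$ from the positivity of $\omega_X|_B=\omega_B(D)$ on the connected Gorenstein curve $B$---for example by invoking a normal-generation criterion for line bundles on Gorenstein curves (such as the one in \cite{Franciosi2}), or by iterating the same restriction-sequence argument on $B$ itself. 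Once $\mu_B$ is surjective, the kernel and quotient pieces combine to give $\Ima(\mu_X)=H^0(X,\omega_X^2)$, i.e.\ quadratic normality of $X$.
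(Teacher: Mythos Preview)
Your approach is essentially identical to the paper's: the kernel/quotient split via the restriction sequence to $B$ is exactly the content of Proposition~\ref{teo1}, and your proposal to handle the quotient by invoking Franciosi's normal-generation criterion is exactly how the paper completes the argument (Lemma~\ref{Bnormgen} combined with Theorem~\ref{fran}).

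The one genuine gap is in verifying the hypothesis of Franciosi's Theorem~B. That theorem requires $\deg(\omega_X|_Z)\ge 2p_a(Z)+1$ not only for $Z=B$ but for \emph{every} subcurve $Z\subseteq B$; the phrase ``positivity of $\omega_X|_B=\omega_B(D)$'' does not deliver this, and your alternative of ``iterating the same restriction-sequence argument on $B$'' has no obvious base case. The paper supplies the missing numerical input in Lemma~\ref{3conn}: very ampleness of $\omega_X$ forces $X$ to be $3$-connected (if some decomposition $X=U\cup V$ had $\deg(U\cap V)=2$, one checks via Riemann--Roch that $|\omega_X|$ fails to separate the length-$2$ scheme $U\cap V$, contradicting very ampleness). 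Three-connectedness then gives, for every proper subcurve $Z\subsetneq X$, the inequality $\deg(\omega_X|_Z)-(2p_a(Z)-2)=\deg(Z\cap\overline{X\setminus Z})\ge 3$, which is exactly Franciosi's hypothesis on all $Z\subseteq B$. With this in hand your sketch becomes a complete proof, matching the paper's.

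A minor remark: your ``dimension count'' for the surjectivity of $H^0(\omega_X)\to H^0(\omega_X|_A)$ tacitly uses $h^1(\omega_A(D))=0$, which follows from the same Serre-duality observation you already made for $\omega_A\otimes\omega_X|_A$; the paper instead notes directly that $H^1(\omega_B)\to H^1(\omega_X)$ is an isomorphism because both curves are connected (Lemma~\ref{lem1}).
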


We also study certain multiplication maps in order to establish sufficient conditions that imply the surjectivity of the map in (\ref{projnorm2}) for some $k$ (\textit{$k$-normal generation}) assuming to know the surjectivity for $(k-1)$ (see Proposition \ref{kNormGen}).

We divided the paper in two sections: in the first one we show our results about multiplication maps of reducible curves and apply them to the study of quadratic normality and of $k$-normal generation given the $(k-1)$-normal generation of the canonical bundle. In the second section we show some applications of our results to interesting cases and give some examples.
\vspace{.5cm}

\emph{Acknowledgements.}

We wish to thank Ciro Ciliberto, Claudio Fontanari and Marco Franciosi for precious conversations.

\section{Quadratic normality}

For any reduced projective curve $X$ and any line bundles $M,N$ on $X$ let
\begin{equation}
\mu_{M,N}:H^0(X,M)\otimes H^0(X,N)\longrightarrow H^0(X,M\otimes N);
\end{equation}
denote the multiplication map. Set $\mu_M=\mu_{M,M}$.
Given the dualizing sheaf $\omega_X$ on $X$, we are interested in studying the surjectivity of the map $\mu_{\omega_X}$. In particular, when we assume that $X$ is canonically embedded this is equivalent to saying that $X$ is \textit{quadratically normal}. We have

\begin{Proposition}\label{teo1}
Let $X$ be a connected reduced curve of genus $g$ with planar singularities and $\omega_X$ very ample. Assume that $X=A\cup B$, with $A,B$ connected and smooth at $D:=A\cap B$. If
\begin{itemize}
\item[(i)]$\mu_{\omega_A,{\omega_X}|_A}$ is surjective,
\item[(ii)]$\mu_{{\omega_X}|_B}$ is surjective,
\end{itemize}
then
$\mu_{\omega_X}$ is surjective.
\end{Proposition}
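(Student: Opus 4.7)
The plan is to reduce the surjectivity of $\mu_{\omega_X}$ to the two hypotheses via the exact sequences induced by the decomposition $X = A \cup B$. Since $X$ is Gorenstein with planar singularities and $A, B$ are smooth at $D$, the standard identification $\mathcal{I}_{B,X} \cong \O_A(-D)$ together with adjunction produces a canonical inclusion $\omega_A \hookrightarrow \omega_X$ realising $\omega_A$ as the sheaf of sections of $\omega_X$ vanishing along $B$, with quotient $\omega_X|_B$. Tensoring with $\omega_X$ yields
$$0 \to \omega_A \otimes \omega_X|_A \to \omega_X^{\otimes 2} \to \omega_X|_B^{\otimes 2} \to 0,$$
together with a symmetric sequence obtained by interchanging $A$ and $B$.

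The next step is to prove that both restriction maps $H^0(\omega_X) \to H^0(\omega_X|_A)$ and $H^0(\omega_X) \to H^0(\omega_X|_B)$ are surjective. Using the arithmetic genus formula $g = g_A + g_B + \deg(D) - 1$, Riemann--Roch, and the vanishing $H^1(\omega_X|_B) \cong H^0(\O_B(-D))^{\vee} = 0$ (valid because $D \ne \emptyset$ on the connected curve $B$), one computes $h^0(\omega_X) - h^0(\omega_A) = g - g_A = g_B + \deg(D) - 1 = h^0(\omega_X|_B)$, which forces surjectivity in the long exact sequence attached to $0 \to \omega_A \to \omega_X \to \omega_X|_B \to 0$.

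Combining these surjections with hypothesis (ii), the composition
$$H^0(\omega_X) \otimes H^0(\omega_X) \xrightarrow{\mu_{\omega_X}} H^0(\omega_X^{\otimes 2}) \longrightarrow H^0(\omega_X|_B^{\otimes 2})$$
is surjective, as it factors through the tensor product of restrictions followed by $\mu_{\omega_X|_B}$. The exact sequence above then yields $H^0(\omega_X^{\otimes 2}) = \Ima(\mu_{\omega_X}) + H^0(\omega_A \otimes \omega_X|_A)$, so it suffices to prove the inclusion $H^0(\omega_A \otimes \omega_X|_A) \subseteq \Ima(\mu_{\omega_X})$. This is where hypothesis (i) enters: given $s$ in the left-hand side, write $s = \sum_i t_i u_i$ with $t_i \in H^0(A, \omega_A)$ and $u_i \in H^0(A, \omega_X|_A)$, view each $t_i$ as a section of $\omega_X$ via $\omega_A \hookrightarrow \omega_X$, and lift each $u_i$ to some $\tilde u_i \in H^0(X, \omega_X)$ using the surjection from the previous step. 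Since $t_i$ vanishes on $B$, the element $\sum_i t_i \tilde u_i \in H^0(\omega_X^{\otimes 2})$ automatically lies in the subspace $H^0(\omega_A \otimes \omega_X|_A)$, and restricts to $s$ on $A$, hence coincides with $s$.

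The main subtle point is not the diagram chase but the opening identification $\omega_A \hookrightarrow \omega_X$ with cokernel $\omega_X|_B$, which encodes the Gorenstein structure together with the smoothness of $A, B$ along $D$; once this structural input is in place the remainder of the proof becomes a clean pair of lifting steps.
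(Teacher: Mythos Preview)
Your argument is correct and follows essentially the same route as the paper: identify $\mathcal{I}_B\otimes\omega_X\cong\omega_A$ to obtain the exact sequence $0\to\omega_A\to\omega_X\to\omega_X\vert_B\to 0$ and its $\omega_X$-twist, prove surjectivity of the restriction maps $H^0(\omega_X)\to H^0(\omega_X\vert_A)$ and $H^0(\omega_X)\to H^0(\omega_X\vert_B)$, and then carry out the two-step linear-algebra reduction (surject onto $H^0(\omega_X^2\vert_B)$ via (ii), then cover the kernel $H^0(\omega_A\otimes\omega_X\vert_A)$ via (i)). The only cosmetic difference is that you establish the restriction surjectivity by a dimension count using the genus formula $g=g_A+g_B+\deg D-1$ and the vanishing $h^1(\omega_B(D))=0$, whereas the paper argues directly with the long exact sequence and the equality $h^1(\omega_B)=h^1(\omega_X)=1$; these are equivalent computations.
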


In order to prove the proposition, we need some background material. We are going to keep the notation used in the statement of Proposition \ref{teo1}. Let $D:= A\cap B$ be the scheme-theoretic intersection. We will view $D$ also as a subscheme
of $A$ and $B$. Since both $A$ and $B$ are smooth at each point of the support of $D$, that we denote by $supp(D)$, the scheme $D$ is a Cartier divisor of both $A$ and $B$; more in general, this is true if $X$ has only planar singularities at each point
of $supp(D)$, because in this case a local equation of $B$ in an ambient germ of a smooth surface gives a local equation of $D$ as a subscheme of $A$.

\begin{Remark}\label{properties}
{\rm According to the notation above, we have that
\begin{itemize}
\item[(i)] It is well known that a curve with planar singularities is Gorenstein.
\item[(ii)] Since $X$ is Gorenstein and locally planar at the points of $supp(D)$, then $A$ and $B$ are Gorenstein as well, so that $\omega_A$ and $\omega_B$ are both line bundles on $A$ and $B$.

\item[(iii)] Since $X$ is locally planar at the points of $supp(D)$, the adjunction formula gives ${\omega_X}|_A=\omega_A(D)$ and ${\omega_X}|_B=\omega_B(D)$. Thus
$\deg  ({\omega_X}|_A) =2g_A-2+\delta$ and $\deg ({\omega_X}|_B) = 2g_B-2+\delta$, where of course $g_A, g_B$ are the arithmetic genera of $A$ and $B$, and $\delta=\deg (D)$.  \end{itemize} }
\end{Remark}

\begin{Lemma}\label{a1}
Let $Z$ be a reduced, Gorenstein and connected projective curve. Let $E$ be an effective Cartier divisor on $Z$ such that $E \neq 0$.
Then $h^0(\mathcal {I}_E)=0$ and $h^1(\omega _Z(E)) =0$.
\end{Lemma}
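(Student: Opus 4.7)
The claim is very clean and I expect the proof to be short, following from the standard exact sequence of the subscheme together with Serre duality for Gorenstein curves.

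\medskip

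\textbf{Plan for $h^0(\mathcal{I}_E)=0$.} I would start from the short exact sequence
\[
0 \longrightarrow \mathcal{I}_E \longrightarrow \mathcal{O}_Z \longrightarrow \mathcal{O}_E \longrightarrow 0,
\]
which on global sections gives $H^0(Z,\mathcal{I}_E) = \ker\bigl(H^0(Z,\mathcal{O}_Z)\to H^0(E,\mathcal{O}_E)\bigr)$. Since $Z$ is reduced, connected and projective over the algebraically closed base field $k$, we have $H^0(Z,\mathcal{O}_Z)=k$. Because $E$ is a nonzero effective Cartier divisor, $E$ is a nonempty projective subscheme, so $H^0(E,\mathcal{O}_E)\neq 0$ and the constant section $1$ maps to the nonzero constant $1$. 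Hence the restriction map is injective on the $1$-dimensional space $k$, and therefore $h^0(\mathcal{I}_E)=0$.

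\medskip

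\textbf{Plan for $h^1(\omega_Z(E))=0$.} The key ingredient here is Serre duality on the Gorenstein curve $Z$: for any line bundle $L$ on $Z$ one has $H^1(Z,L)\cong H^0(Z,\omega_Z\otimes L^{-1})^\vee$. Since $E$ is an effective Cartier divisor, $\omega_Z(E)$ is a genuine line bundle and
\[
\omega_Z\otimes\bigl(\omega_Z(E)\bigr)^{-1}=\mathcal{O}_Z(-E)=\mathcal{I}_E.
\]
Thus $h^1(\omega_Z(E))=h^0(\mathcal{I}_E)$, and this vanishes by the first part.

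\medskip

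\textbf{Where the care is needed.} There are no conceptual difficulties; the only points that deserve a sentence of justification are (a) that $H^0(\mathcal{O}_Z)$ really equals $k$ (using connectedness plus reducedness over an algebraically closed field), and (b) that Serre duality in the form above is available because $Z$ is Gorenstein, so $\omega_Z$ is a dualizing line bundle and $\omega_Z(E)$ is locally free. Apart from these, the proof is essentially a two-line application of the structure sequence of $E$ together with duality.
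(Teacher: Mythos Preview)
Your proposal is correct and follows essentially the same approach as the paper's own proof: both show $h^0(\mathcal{I}_E)=0$ from $h^0(\mathcal{O}_Z)=1$ together with $E\neq\emptyset$, and then deduce $h^1(\omega_Z(E))=0$ by duality, reducing it to $h^0(\mathcal{O}_Z(-E))=h^0(\mathcal{I}_E)$. The only cosmetic difference is that the paper phrases the duality step via Grothendieck duality for locally Cohen--Macaulay schemes (citing Altman--Kleiman) in terms of $\mathrm{Ext}^0(\omega_Z(E),\omega_Z)$ and then uses that $\omega_Z$ is invertible, whereas you invoke Serre duality for line bundles on a Gorenstein curve directly; these are equivalent here.
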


\begin{proof}
Since $Z$ is connected, $h^0(\mathcal {O}_Z)=1$. Since $E$ is effective and non-empty, we get $h^0(\mathcal {I}_E)=0$. We apply the duality
for locally Cohen-Macaulay schemes, i.e. we apply to the scheme $X:=Z$ and the sheaf $F:= \omega _Z(E)$ the case $r=p=1$
of the theorem at page 1 of \cite{ak}. We get $h^1(\omega _Z(E)) = \dim (Ext ^0(\omega _Z(E),\omega _Z))$, i.e.
$h^1(\omega _Z(E)) =h^0({\it Hom}(\omega_Z(E),\omega _Z))$. Since $\omega _Z$ is assumed to be locally free, we get
$h^1(\omega _Z(E)) = h^0({\it Hom}(\mathcal {O}_Z(E),\mathcal {O}_Z))=0$.
\end{proof}

\begin{Lemma}\label{lem1}
Let $X$ be a connected reduced curve of genus $g$ with planar singularities and $\omega_X$ very ample. Assume that $X=A\cup B$, with $A,B$ connected and smooth at $D:=A\cap B$. For any subcurve $Z$ of $X$ we consider the map
$$\rho_Z:H^0(X,\omega_X)\longrightarrow H^0(Z,{\omega_X}|_Z).$$
Then $\rho_A$ and $\rho_B$ are surjective.
\end{Lemma}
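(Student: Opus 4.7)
The plan is to realize $\rho_A$ as part of a long exact cohomology sequence and reduce its surjectivity to a dimension count. I may assume $D \neq \emptyset$, since otherwise $X$ being connected forces $B = \emptyset$ (or $A = \emptyset$) and the statement is trivial.

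I would start from the Mayer--Vietoris sequence
$$0 \to \mathcal{O}_X \to \mathcal{O}_A \oplus \mathcal{O}_B \to \mathcal{O}_D \to 0,$$
which is exact because $D = A \cap B$ is the scheme-theoretic intersection. A local-section chase identifies the ideal sheaf $\mathcal{I}_A \subset \mathcal{O}_X$ with the subsheaf of $\mathcal{O}_B$ of sections vanishing on $D$, giving $\mathcal{I}_A \cong \mathcal{O}_B(-D)$; the planarity hypothesis is precisely what makes $D$ a Cartier divisor on $B$ so that $\mathcal{O}_B(-D)$ really is a line bundle. Tensoring $0 \to \mathcal{I}_A \to \mathcal{O}_X \to \mathcal{O}_A \to 0$ by the line bundle $\omega_X$ (Remark \ref{properties}(i)) and using the adjunction $\omega_X|_B = \omega_B(D)$ from Remark \ref{properties}(iii), the twist cancels and I obtain
$$0 \to \omega_B \to \omega_X \to \omega_X|_A \to 0.$$

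The relevant segment of the long exact cohomology sequence is
$$H^0(X, \omega_X) \xrightarrow{\rho_A} H^0(A, \omega_X|_A) \xrightarrow{\partial} H^1(B, \omega_B) \to H^1(X, \omega_X) \to H^1(A, \omega_X|_A) \to 0,$$
so surjectivity of $\rho_A$ is equivalent to the vanishing $\partial = 0$. Serre duality for the connected Gorenstein projective curves $X$ and $B$ (Remark \ref{properties}(ii)) gives $h^1(X, \omega_X) = h^1(B, \omega_B) = 1$, while Lemma \ref{a1} applied with $Z = A$ and $E = D$ yields $h^1(A, \omega_A(D)) = 0$. Thus $H^1(B, \omega_B) \to H^1(X, \omega_X)$ is a surjection between one-dimensional spaces, hence an isomorphism, and this forces $\partial = 0$. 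The surjectivity of $\rho_B$ follows by exchanging the roles of $A$ and $B$. The only delicate step is the identification $\mathcal{I}_A \cong \mathcal{O}_B(-D)$ together with the adjunction twist cancellation; from there the argument is a routine dimension count against Lemma \ref{a1}.
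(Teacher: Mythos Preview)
Your proof is correct and follows essentially the same approach as the paper: both derive the short exact sequence $0 \to \omega_B \to \omega_X \to \omega_X|_A \to 0$ and conclude via the dimension count $h^1(\omega_B)=h^1(\omega_X)=1$, $h^1(\omega_A(D))=0$ from Lemma~\ref{a1}. The only cosmetic difference is in justifying $\mathcal{I}_A \otimes \omega_X \cong \omega_B$: the paper argues via a local surface embedding and adjunction on $S$, whereas you identify $\mathcal{I}_A \cong \mathcal{O}_B(-D)$ directly and then invoke the adjunction $\omega_X|_B = \omega_B(D)$ from Remark~\ref{properties}(iii).
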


\begin{proof}
To fix ideas we work on $Z=A$; let us consider the exact sequence:
$$0\rightarrow\mathcal I_A\otimes\omega_X\rightarrow\omega_X\rightarrow{\omega_X}|_A\rightarrow 0.$$
We claim that $\mathcal I_A\otimes\omega_X=\omega_B$. To prove this, we notice that since $X$ has only planar singularities, it can be embedded in a smooth surface $S$, where $X$, $A$ and $B$ are Cartier divisors. Thus $D$ is a Cartier divisor
of $A$ and of $B$ (but seldom of $X$). By the adjunction formula we have that
$$\omega_X={\omega_S(A+B)}|_X,$$ then
$$\omega_B={\omega_S(B)}|_B={\omega_S(A+B-A)}|_B=({\omega_S(A+B-A)}|_X)|_B$$
$$=({\omega_S(A+B)}|_X\otimes\mathcal I_A)|_B=(\omega_X\otimes\mathcal I_A)|_B.$$
So the claim is proved and the previous sequence becomes
$$0\rightarrow\omega_B\rightarrow\omega_X\rightarrow{\omega_X}|_A\rightarrow 0.$$ The corresponding long exact sequence in cohomology is
$$0\rightarrow H^0(\omega_B)\rightarrow H^0(\omega_X)\rightarrow H^0({\omega_X}|_A)\rightarrow H^1(\omega_B)\rightarrow H^1(\omega_X)\rightarrow H^1({\omega_X}|_A)\rightarrow\cdots$$
Since ${\omega_X}|_A=\omega_A(D)$, by lemma \ref{a1} we have that $\dim H^1({\omega_X}|_A)=0$. Moreover, being both $B$ and $X$ connected, we have that $\dim H^1(\omega_B)=1$ and $\dim H^1(\omega_X)=1$, so the map
$H^0(\omega_X)\rightarrow H^0({\omega_X}|_A)$ is surjective.
\end{proof}

We are now able to prove proposition \ref{teo1}:
\vspace{.1cm}

\noindent{\emph {Proof of proposition \ref{teo1}.}} Let us consider the composition
\begin{equation}
H^0(\omega_X)\otimes H^0(\omega_X)\stackrel{\mu_{\omega_X}}{\longrightarrow} H^0(\omega_X^2)\stackrel{\rho_B^2}{\longrightarrow}H^0({\omega_X}^2|_B);
\end{equation}
In order to show that $\mu_{\omega_X}$ is surjective, it suffices, by a basic argument of linear algebra, to prove that
\begin{itemize}
\item[(a)]$\rho_B^2\circ \mu_{\omega_X}$ is surjective,
\item[(b)]${\rm Ker}\rho_B^2\subseteq{\rm Im}\mu_{\omega_X}$.
\end{itemize}
So let us show (a): we have a commutative diagram
\begin{equation}
\xymatrix{
H^0(\omega_X)\otimes H^0(\omega_X) \ar[r]^<<<<<<<{\rho_B^2\circ\mu_{\omega_X}} \ar[d]^{\rho_B\otimes\rho_B} & H^0({\omega_X}^2|_B)\\
H^0({\omega_X}|_B)\otimes H^0({\omega_X}|_B) \ar[ur]_{\mu_{\omega_B(D)}}}
\end{equation}
where the map $\rho_B\otimes\rho_B$ is surjective by lemma \ref{lem1} and
$\mu_{\omega_B(D)}$ is surjective by assumption (ii). So, by the commutativity of the diagram we get (a).

In order to prove (b), we notice that
$${\rm Ker\rho_B^2}=H^0(X,\mathcal{I}_B\otimes\omega_X^2),$$
and take
$$\mu:=\mu_{\omega_X}|_{H^0(X,\mathcal{I}_B\otimes\omega_X)\otimes H^0(\omega_X)}.$$
So we have the following commutative diagram:
\begin{equation}
\xymatrix{
H^0(\mathcal{I}_B\otimes\omega_X)\otimes H^0(\omega_X)\ar[r]^<<<<<{\mu}\ar[d]^{id\otimes\rho_A}& H^0(\mathcal{I}_B\otimes{\omega_X}^2)\ar[d]^\cong\\
H^0({\omega_A})\otimes H^0({\omega_X}|_A)\ar[r]^<<<<<<{\mu_{\omega_A,{\omega_X}|_A}} & H^0(\omega_A\otimes{\omega_X}|_A)
}
\end{equation}
The map $id\otimes\rho_A$ is surjective by lemma \ref{lem1}, while $\mu_{\omega_A,{\omega_X}|_A}$ is surjective by assumption (i). Hence $\mu$ is surjective. Since $\mu$ is a restriction of $\mu_{\omega_X}$, we get ${\rm Ker}\rho_B^2\subseteq{\rm Im}\mu_{\omega_X}$.\qed

\begin{Definition}
{\rm Fix an integer $m>0$; let $X$ be a reduced and Gorenstein projective curve. We say that $X$ is \textit{$m$-connected} (resp. \textit{numerically $m$-connected}) if for any decomposition $X=U\cup V$ with $U,V$ subcurves without common irreducible components, the scheme $U\cap V$ has degree at least $m$ (resp. $\deg{\omega_X}|_U-\deg\omega_U\geq m$ and $\deg{\omega_X}|_V-\deg\omega_V\geq m$).}
\end{Definition}

\begin{Remark}
{\rm If every point of $X$ lying on at least two irreducible components of $X$ is a planar singularity of $X$, then $X$ is $m$-connected if and only if it is numerically $m$-connected (see \cite{cfhr}, Remark 3.2).}
\end{Remark}

\begin{Notation}\label{X_mult}
{\rm Given a reduced curve $X$, we will denote by $X_{\rm mult}\subset X$ the set of points of $X$ lying on at least two irreducible components of $X$ and by $X_{\rm sm}$ the open set of smooth points of $X$.}
\end{Notation}

\begin{Lemma}\label{3conn}
Let $X$ be a connected, reduced and Gorenstein curve of genus $g$ with $\omega_X$ very ample. Assume that $X$ has planar singularities at the points of $X_{\rm mult}$. Then $X$ is $3$-connected.
\end{Lemma}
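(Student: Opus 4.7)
The plan is a proof by contradiction. Suppose there is a decomposition $X=U\cup V$ into non-empty subcurves without common irreducible components with $\delta:=\deg(U\cap V)\le 2$; I will derive a contradiction with the very ampleness of $\omega_X$. The case $\delta=0$ is immediately excluded by the connectedness of $X$, so I may assume $\delta\in\{1,2\}$. The strategy is to evaluate sections of $\omega_X$ on the length-$\delta$ subscheme $D:=U\cap V$ and show that the image of the restriction map is strictly smaller than $H^0(\omega_X|_D)$.

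Set up the adjunction exactly as in the proof of Lemma~\ref{lem1}: the assumption of planar singularities at $X_{\rm mult}$ (which contains the support of $D$) lets us embed $X$ locally into a smooth surface. Consequently $D$ is Cartier on both $U$ and $V$, both $U$ and $V$ are Gorenstein by Remark~\ref{properties}, and one obtains the identifications $\omega_U=\mathcal{I}_V\otimes\omega_X$ and $\omega_V=\mathcal{I}_U\otimes\omega_X$ as coherent subsheaves of $\omega_X$. Since on $X$ one has $\mathcal{I}_U\cap\mathcal{I}_V=\mathcal{I}_{U\cup V}=0$ and $\mathcal{I}_U+\mathcal{I}_V=\mathcal{I}_D$, tensoring with the line bundle $\omega_X$ gives $\omega_U\cap\omega_V=0$ and $\omega_U+\omega_V=\mathcal{I}_D\,\omega_X$ inside $\omega_X$, producing the short exact sequence
$$
0\longrightarrow\omega_U+\omega_V\longrightarrow\omega_X\longrightarrow\omega_X|_D\longrightarrow 0.
$$

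The rest is a dimension count. From $\omega_U\cap\omega_V=0$ it follows that $h^0(\omega_U+\omega_V)=h^0(\omega_U)+h^0(\omega_V)=g_U+g_V$, while $h^0(\omega_X|_D)=\delta$ since $\omega_X$ is a line bundle and $D$ is $0$-dimensional. Computing Euler characteristics of $0\to\omega_U\to\omega_X\to\omega_X|_V\to 0$ and invoking Remark~\ref{properties}(iii) yields the genus formula $g=g_U+g_V+\delta-1$, so the image of $H^0(\omega_X)\to H^0(\omega_X|_D)$ has dimension $g-g_U-g_V=\delta-1$. On the other hand, a very ample line bundle separates every length-$\le 2$ subscheme, so for $\delta\le 2$ the restriction $H^0(\omega_X)\to H^0(\omega_X|_D)$ must be surjective, forcing the image to have dimension $\delta$. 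This contradicts $\delta-1<\delta$, so $\delta\ge 3$ and $X$ is $3$-connected.

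The only step that will require some care is the ideal-theoretic identity $\omega_U+\omega_V=\mathcal{I}_D\,\omega_X$ inside $\omega_X$ when $U$ or $V$ is disconnected; but this is a local question at the points of $X_{\rm mult}$, where the planarity hypothesis lets us repeat verbatim the smooth-ambient-surface adjunction computation already performed in Lemma~\ref{lem1}. Everything else reduces to Riemann--Roch and elementary linear algebra.
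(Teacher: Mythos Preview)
Your proof is correct and takes a genuinely different route from the paper's. The paper first invokes Catanese's theorem (that global generation of $\omega_X$ implies $2$-connectedness) to dispose of $\delta=1$, then for $\delta=2$ works on the single subcurve $U$: using Lemma~\ref{a1} and Riemann--Roch it shows $h^0(\omega_U(D))=h^0(\omega_U)+1$, so $|\omega_X|$ fails to separate the length-$2$ scheme $D$ already on $U$. Your argument instead treats $\delta=1$ and $\delta=2$ uniformly via the Mayer--Vietoris-type sequence $0\to\omega_U\oplus\omega_V\to\omega_X\to\omega_X|_D\to 0$ and a global dimension count, and is self-contained in that it does not appeal to the external $2$-connectedness result. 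The paper's approach is a bit quicker once Catanese is on the table; yours is more symmetric in $U$ and $V$ and avoids the citation.

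One small point to tighten: you write $h^0(\omega_U)=g_U$, which presumes $U$ is connected. In general $h^0(\omega_U)=g_U+c_U-1$ where $c_U$ is the number of connected components, so $h^0(\omega_U+\omega_V)\ge g_U+g_V$ and the image of $H^0(\omega_X)\to H^0(\omega_X|_D)$ has dimension at most $\delta-1$; the contradiction with very ampleness still follows. Alternatively, note that if $\delta\le 2$ and (say) $V$ is disconnected, one of its components meets $U$ in degree $\le 1$, reducing to $\delta\le 1$ with both pieces connected. Either remark closes the gap you flagged in your last paragraph, where you located the potential trouble only in the ideal-theoretic identity rather than in the dimension formula.
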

\begin{proof}
Let us fix any decomposition $X=U\cup V$ of $X$, with $U,V$ subcurves and $\dim (U\cap V)=0$. Set $D:=U\cap V$. Since $X$ has planar singularities at the points of $supp(D)$, $D$ is a Cartier divisor of $U$. To prove the lemma it is sufficient to show the inequality
$\deg (D) \ge 3$. Assume $\deg (D) \le 2$. Since $\omega_X$ is globally generated, $X$ is $2$-connected (see \cite{c}, Theorem D). Assume, then, $\deg D=2$. Remark \ref{properties} gives ${\omega_X}|_U\cong\omega_U(D)$. Since $X$ is $2$-connected and $\deg D=2$, we easily see that $U$ is connected. By lemma \ref{a1} we get that $\dim H^1(\omega_U(D))=0$. Thus Riemann-Roch gives $$\dim H^0(\omega_U(D))=\dim H^0(\omega_U)+1.$$
Since $D$ is a Cartier divisor of $U$, we get $\mathcal I_D\otimes\omega_U(D)\cong\omega_U$.
Thus $$\dim H^0(\mathcal I_D\otimes{\omega_X}|_U)=\dim H^0({\omega_X}|_U)-1,$$
hence the restriction to $D$ of the morphism induced by $|\omega_X|$ is not very ample, contradiction.
\end{proof}

\begin{Definition}\label{normGen}
{\rm One says that a line bundle $L$ on a curve $X$ is \textit{normally generated} if the maps
$$H^0(X,L)^k\rightarrow H^0(X,L^k)$$ are surjective for any $k\geq 1$.}
\end{Definition}

Now we need to recall Theorem B in \cite{Franciosi2}.

\begin{Theorem}[Franciosi]\label{fran}
Let $C$ be a connected reduced curve and let $\mathcal H$ be an invertible sheaf on $C$ such that
$$\deg\mathcal H|_Z\geq2p_a(Z)+1\makebox{ for all subcurves }Z\subseteq C.$$
Then $\mathcal H$ is normally generated on $C$.
\end{Theorem}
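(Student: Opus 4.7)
The plan is to follow the classical Castelnuovo--Mumford approach for normal generation, adapting it from smooth curves to reduced connected Gorenstein curves via the cohomological vanishing tools developed in \cite{cfhr}.

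First I would establish the key vanishing: under the hypothesis $\deg \mathcal{H}|_Z \ge 2p_a(Z)+1$ for every subcurve $Z \subseteq C$, one has $H^1(C,\mathcal{H}^n) = 0$ for all $n \ge 1$. This is an immediate consequence of a Ramanujam-type vanishing extending Lemma \ref{a1}: if $\deg L|_Z \ge 2p_a(Z)-1$ for every subcurve $Z \subseteq C$, then $H^1(C,L) = 0$, proved by induction on the number of irreducible components via residual exact sequences. Since $\deg \mathcal{H}^n|_Z \ge n(2p_a(Z)+1) \ge 2p_a(Z)+1$, the vanishing applies to every positive power of $\mathcal{H}$.

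Next, the hypothesis also forces $\mathcal{H}$ to be very ample (again by a CFHR-type criterion) with $h^0(\mathcal{H}) = \deg \mathcal{H} - p_a(C) + 1 \ge 3$, so one may pick a base-point-free pencil $V \subset H^0(C,\mathcal{H})$. Tensoring
\begin{equation*}
0 \to \mathcal{H}^{-1} \to V \otimes_k \mathcal{O}_C \to \mathcal{H} \to 0
\end{equation*}
with $\mathcal{H}^n$ and passing to the long exact sequence in cohomology, the cokernel of $V \otimes H^0(\mathcal{H}^n) \to H^0(\mathcal{H}^{n+1})$ sits inside $H^1(\mathcal{H}^{n-1})$. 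For $n \ge 2$ this vanishes by the previous step, hence the full multiplication $\mu_{\mathcal{H},\mathcal{H}^n}$ is surjective. A straightforward induction on $n$ then reduces normal generation to the single case $n=1$, namely the surjectivity of $\mu_{\mathcal{H}} : H^0(\mathcal{H}) \otimes H^0(\mathcal{H}) \to H^0(\mathcal{H}^2)$.

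The hard part will be precisely this base case, since here the pencil-trick obstruction lies in $H^1(\mathcal{O}_C) \cong k^{p_a(C)}$, which need not vanish. I expect to proceed by induction on the number of irreducible components of $C$: the integral case can be handled by passing to the normalization together with Castelnuovo's classical argument and \cite{Kleiman}, and for the inductive step one decomposes $C = A \cup B$ into proper subcurves meeting along $D$. The degree hypothesis is inherited by every subcurve of $A$ and of $B$, so by induction $\mathcal{H}|_A$ and $\mathcal{H}|_B$ are normally generated on their respective curves; a diagram chase modelled exactly on the proof of Proposition \ref{teo1}, replacing $\omega_X$ by $\mathcal{H}$ throughout and using the first step to obtain the analogue of Lemma \ref{lem1} for $\mathcal{H}$, then transfers quadratic normality from the pieces up to all of $C$. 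The main technical subtlety is choosing a decomposition for which the hypothesis on subcurves continues to hold on $A$ and $B$ with the correct arithmetic relating $\deg D$ to $p_a(A)$ and $p_a(B)$, and this bookkeeping is what I expect to be the principal obstacle.
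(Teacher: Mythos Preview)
The paper does not prove this statement. Theorem~\ref{fran} is quoted verbatim as Theorem~B of \cite{Franciosi2} and is used as a black box (immediately afterwards, in Lemma~\ref{Bnormgen}). There is therefore no proof in the paper to compare your proposal against.

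That said, your outline contains a genuine gap in the base case $n=1$. When you ``replace $\omega_X$ by $\mathcal{H}$'' in the scheme of Proposition~\ref{teo1}, the role of $\omega_A=\mathcal I_B\otimes\omega_X$ is taken over by $\mathcal I_B\otimes\mathcal H$, which on $A$ is $\mathcal H|_A(-D)$. Condition~(ii) becomes the surjectivity of $\mu_{\mathcal H|_B}$, and this indeed follows from your inductive hypothesis. But condition~(i) becomes the surjectivity of
\[
\mu_{\mathcal H|_A(-D),\,\mathcal H|_A}\colon H^0(A,\mathcal H|_A(-D))\otimes H^0(A,\mathcal H|_A)\longrightarrow H^0(A,\mathcal H^2|_A(-D)),
\]
and this is \emph{not} supplied by knowing that $\mathcal H|_A$ is normally generated. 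The twisted bundle $\mathcal H|_A(-D)$ will in general fail the degree hypothesis on subcurves of $A$ (there is no reason for $\deg(\mathcal H|_A(-D))|_Z\ge 2p_a(Z)+1$ to hold for $Z\subseteq A$), so the induction does not close. The ``bookkeeping'' you flag as the principal obstacle is thus not merely bookkeeping: it is the heart of the matter, and Franciosi's argument in \cite{Franciosi2} handles it by a different and considerably more delicate induction (on $h^0$ of an auxiliary sheaf, via Koszul-type reductions), not by a decomposition into subcurves in the style of Proposition~\ref{teo1}. Your reduction of the $n\ge 2$ steps to $n=1$ via the base-point-free pencil trick and CFHR vanishing is, however, correct and standard.
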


We are now able to prove the following lemma.

\begin{Lemma}\label{Bnormgen}
Let $X=A\cup B$, with $A,B\neq\emptyset$ and assume that $X$ is Gorenstein, with planar singularities at the points of $X_{\rm mult}$. Let $\omega_X$ be very ample. Then ${\omega_X}|_A$ and ${\omega_X}|_B$ are normally generated.
\end{Lemma}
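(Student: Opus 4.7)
The plan is to reduce the problem, via Franciosi's criterion (Theorem \ref{fran}), to a purely numerical inequality about subcurves of $A$ (and, by symmetry, of $B$). By symmetry it suffices to handle $\mathcal{H}:={\omega_X}|_A$ on the connected reduced curve $A$; the argument for ${\omega_X}|_B$ is identical. What must therefore be checked is that
$$\deg(\mathcal{H}|_Z)\geq 2p_a(Z)+1$$
for every subcurve $Z\subseteq A$.

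Fix such a $Z$ and set $Y:=\overline{X\setminus Z}$. Then $Y$ contains $B$, so it is nonempty, and in the decomposition $X=Z\cup Y$ the subcurves $Z$ and $Y$ share no irreducible component: any component of $Z$ lies in $A$, while the components of $Y$ are either components of $B$ (disjoint from $A$ in this sense since $A\cap B=D$ is $0$-dimensional) or components of $A$ not contained in $Z$. Because $X$ has planar singularities at the points of $X_{\mathrm{mult}}$, in particular at the points of $Z\cap Y$, the adjunction formula applied exactly as in Remark \ref{properties}(iii) yields ${\omega_X}|_Z=\omega_Z(Z\cap Y)$, whence
$$\deg(\mathcal{H}|_Z)=\deg({\omega_X}|_Z)=2p_a(Z)-2+\deg(Z\cap Y).$$

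The crucial numerical input is Lemma \ref{3conn}: the hypotheses on $X$ are precisely those of that lemma, so $X$ is $3$-connected, and therefore $\deg(Z\cap Y)\geq 3$ for the decomposition just produced. Combined with the displayed equality this gives $\deg(\mathcal{H}|_Z)\geq 2p_a(Z)+1$, which is exactly the hypothesis of Theorem \ref{fran}; normal generation of ${\omega_X}|_A$ follows at once.

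I do not anticipate a real obstacle beyond bookkeeping: the only delicate point is verifying that the decomposition $X=Z\cup Y$ is admissible (both pieces nonempty and sharing no irreducible component) so that $3$-connectedness can be invoked uniformly in $Z$. This is guaranteed by the assumption $B\neq\emptyset$ together with the convention that a subcurve of $A$ is a union of irreducible components of $A$, so that the construction of $Y$ automatically produces a complementary subcurve.
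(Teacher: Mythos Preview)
Your proof is correct and follows essentially the same route as the paper's: reduce to Franciosi's criterion (Theorem~\ref{fran}), then use Lemma~\ref{3conn} to obtain $3$-connectedness of $X$, which for any proper subcurve $Z\subsetneq X$ yields $\deg({\omega_X}|_Z)\ge 2p_a(Z)+1$. The paper's argument is terser---it simply notes $Z\subsetneq X$ because the complementary piece is nonempty and then invokes $3$-connectedness---whereas you spell out the adjunction computation and the construction of $Y=\overline{X\setminus Z}$ explicitly, but the substance is identical.
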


\begin{proof}
Let us prove the conclusions for $B$. By Theorem \ref{fran} it is sufficient to prove that $\deg{\omega_X}|_Z\geq2p_a(Z)+1$ for every subcurve $Z\subseteq B$. Since $A\neq\emptyset$, we have that $Z\subsetneq X$. But since $\omega_X$ is very ample, by lemma \ref{3conn} we have that $X$ is $3$-connected, hence the conclusions.
\end{proof}

We are now ready to prove Theorem \ref{teorema}:
\vspace{.1cm}

\noindent{\emph {Proof of theorem \ref{teorema}}}. We recall that $X$ is a connected, reduced and Gorenstein projective curve of genus $g$ with $\omega_X$ very ample. By hypothesis we assume that $X$ has planar singularities at the points of $X_{\rm mult}$, and that $X=A\cup B$ with $A,B$ connected subcurves being smooth at $D:=A\cap B$.
Since $\mu_{\omega_A,{\omega_X}|_A}$ is surjective, by proposition \ref{teo1} it suffices to show that (ii) holds. But this is true by lemma \ref{Bnormgen}.
\qed
\vspace{.1cm}

In what follows we will investigate when condition (i) of proposition \ref{teo1} holds. If $X$ is any curve, we denote by $X_{\rm sm}$ its smooth locus. We recall a result from \cite{Ballico}; before doing this, let us introduce some notation: if $L$ is a line bundle on a curve $C$ globally generated and such that $\dim H^0(C,L)=r$, it induces a morphism $$h_L:C\rightarrow\mathbb P^{r-1}.$$
\begin{Lemma}[Ballico]\label{lem1.2}
Let $C$ be an integral projective curve with $C\neq\mathbb P^1$ and $R\in{\rm Pic}C$, $R$ globally generated and such that $h_R$ is birational onto its image. Then the multiplication map
$$\mu_{\omega_C,R}:H^0(C,\omega_C)\otimes H^0(C,R)\rightarrow H^0(C,\omega_C\otimes R)$$
is surjective.
\end{Lemma}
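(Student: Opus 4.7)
\textbf{Proof plan for Lemma \ref{lem1.2}.}

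The plan is to combine Castelnuovo's base-point-free pencil trick with a Serre duality computation on the Lazarsfeld (kernel) bundle of $R$. Set $M_R := \ker\bigl(H^0(C,R)\otimes \O_C \twoheadrightarrow R\bigr)$, which fits in
\[
0 \to M_R \to H^0(C,R)\otimes \O_C \to R \to 0.
\]
Tensoring with $\omega_C$ and passing to cohomology yields the exact piece
\[
H^0(\omega_C)\otimes H^0(R) \xrightarrow{\mu_{\omega_C,R}} H^0(\omega_C\otimes R) \xrightarrow{\delta} H^1(\omega_C\otimes M_R),
\]
so that surjectivity of $\mu_{\omega_C,R}$ is equivalent to the vanishing of $\delta$.

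First, I would apply the classical base-point-free pencil trick to a general $2$-dimensional subspace $V\subset H^0(C,R)$: since $R$ is globally generated, such a $V$ is base-point-free, producing the Koszul-type sequence
\[
0 \to R^{-1} \to V\otimes \O_C \to R \to 0.
\]
Tensoring with $\omega_C$ and taking cohomology controls the cokernel of the restricted multiplication $V\otimes H^0(\omega_C)\to H^0(\omega_C\otimes R)$ by $h^1(\omega_C\otimes R^{-1})$, which by Serre duality (valid since $C$ is integral and hence Cohen--Macaulay, with dualizing sheaf $\omega_C$) equals $h^0(R)$.

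Next, to promote this pencil-level bound to the desired surjectivity, I would let $V$ vary and analyze how the subspaces $\mu_{\omega_C,R}(V\otimes H^0(\omega_C))$ assemble inside $H^0(\omega_C\otimes R)$. This is where the birationality of $h_R$ enters essentially: for generic $V$, the pencil $h_V\colon C\to \bP^1$ has fibers lying, via $h_R$, in sufficiently general position on the non-degenerate integral image curve $h_R(C)\subset \bP^{r-1}$. A variant of Noether's $AF+BG$-type argument applied to adjoint sections on the image (together with the observation that $C\neq\bP^1$ forces $\omega_C$ to have nonnegative degree and $h^1(\omega_C)=1$) then forces the connecting map $\delta$ to vanish.

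The main obstacle is precisely this last step: it is the birationality hypothesis on $h_R$, and not merely its finiteness, that prevents extra global sections of $M_R^\vee$ from contributing to $H^1(\omega_C\otimes M_R)$ and obstructing $\delta = 0$. An alternative, if the Noether-type reasoning is cumbersome, would be to pull back along the normalization $\widetilde C \to C$, apply the classical smooth statement on $\widetilde C$ where $h_{\pi^*R}$ remains birational, and descend the surjectivity to $C$ by carefully tracking the conductor ideal in the dualizing sheaves.
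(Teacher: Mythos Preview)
The paper does not prove this lemma at all: it is quoted from \cite{Ballico} and used as a black box. So there is no ``paper's own proof'' to compare against, only the original argument in \cite{Ballico}.

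That said, your proposal is not yet a proof. The setup via the kernel bundle $M_R$ and the reduction of surjectivity to $\delta=0$ is correct, and the pencil-trick computation for a single generic $2$-plane $V\subset H^0(R)$ is fine. The problem is the passage you yourself flag as the ``main obstacle'': varying $V$ and invoking a vague ``Noether $AF+BG$-type argument'' is not an argument. Nothing you have written explains why the images $\mu_{\omega_C,R}(V\otimes H^0(\omega_C))$, as $V$ ranges, fill out $H^0(\omega_C\otimes R)$; and the birationality hypothesis, which is essential, never actually gets used in a concrete way. Your normalization alternative is more promising but also undeveloped: you would need to check that surjectivity upstairs for $\pi^*\omega_C=\omega_{\widetilde C}(\text{conductor})$ and $\pi^*R$ descends, which requires controlling the conductor contribution and is not automatic.

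The argument in \cite{Ballico} (and the one this paper mimics immediately afterward in the proof of the proposition on good $(r-1)$-secants) is more elementary and avoids all of this. One inducts on $r=h^0(R)-1$. The base case $r=1$ is exactly Lemma~\ref{lem1.1}: when $h^0(R)=2$, the base-point-free pencil trick gives the cokernel inside $\ker\bigl(H^1(\omega_C\otimes R^{-1})\to H^0(R)\otimes H^1(\omega_C)\bigr)$, and both spaces have dimension $2$ with the map surjective, so the cokernel vanishes. For the inductive step one uses birationality of $h_R$ to pick a smooth point $p$ at which $h_R$ is an embedding, so that $R(-p)$ is still globally generated with $h_{R(-p)}$ birational and $h^0(R(-p))=h^0(R)-1$; by induction $\mu_{\omega_C,R(-p)}$ is surjective, and one finishes by producing a single product section not vanishing at $p$, exactly as in part (b) of the proof of Theorem~\ref{teo2} in this paper. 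Birationality enters precisely to guarantee that such points $p$ exist at every stage of the descent.
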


More in general we have the following result.

\begin{Theorem}\label{teo2}
Let $A$ be a reduced, connected and Gorenstein projective curve such that $\omega_A$ is very ample and the map $\mu_{\omega_A}$ is surjective. Let $E\subset A_{\rm sm}$ be an effective divisor on $A$ such that $\deg E\geq 2$. Then $\mu_{\omega_A,{\omega_A(E)}}$ is surjective.
\end{Theorem}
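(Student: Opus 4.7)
\emph{Proof plan.} We argue by induction on $d:=\deg E\ge 2$. At each step we use a short exact sequence of sheaves of the form $0\to\omega_A(F)\to\omega_A(E)\to\omega_A(E)|_{E-F}\to 0$ (for a suitable subdivisor $F\subseteq E$), form a three-column commutative diagram by tensoring with $H^0(\omega_A)$ on top and multiplying into $\omega_A^2(E)$ on the bottom, and then apply a diagram chase. The cohomology vanishings we need---namely $h^1(\omega_A(G))=0$ and $h^1(\omega_A^2(G))=0$ for any effective Cartier $G\ge 0$ on $A$---both follow from Lemma \ref{a1}: since $\omega_A$ is very ample, a general section of $\omega_A$ does not vanish identically on any irreducible component of $A$, so its zero scheme is a nonzero effective Cartier divisor $D_0$ on $A$ with $\mathcal{O}_A(D_0)\cong\omega_A$, and hence $\omega_A^2(G)\cong\omega_A(D_0+G)$.

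\emph{Inductive step ($d\ge 3$).} Pick $p\in\supp(E)\subseteq A_{\rm sm}$ and set $E':=E-p$, so that $\deg E'\ge 2$ and by induction $\mu_{\omega_A,\omega_A(E')}$ is surjective. In the diagram associated to $0\to\omega_A(E')\to\omega_A(E)\to\omega_A(E)|_p\to 0$, both rows are short exact (by the vanishings above), the left vertical arrow is surjective by the inductive hypothesis, and the right vertical arrow $H^0(\omega_A)\otimes H^0(\omega_A(E)|_p)\to H^0(\omega_A^2(E)|_p)$ is, after natural identifications, the map $s\otimes m\mapsto s(p)\, m$ into the one-dimensional fiber $\omega_A^2(E)|_p$; this is surjective because $\omega_A$ is globally generated at $p$. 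A diagram chase then yields surjectivity of $\mu_{\omega_A,\omega_A(E)}$.

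\emph{Base case ($d=2$).} We apply the same strategy with $F=0$, using the sequence $0\to\omega_A\to\omega_A(E)\to\omega_A(E)|_E\to 0$; the left vertical arrow of the diagram is now $\mu_{\omega_A}$, surjective by hypothesis. Letting $M$ denote the image of $H^0(\omega_A(E))$ in $H^0(\omega_A(E)|_E)$, the long exact sequence together with $h^1(\omega_A)=1$ and $h^1(\omega_A(E))=0$ gives $\dim M=\deg E-1=1$. The crux is surjectivity of the induced right vertical arrow $\ov{\mu}:H^0(\omega_A)\otimes M\to H^0(\omega_A^2(E)|_E)$: writing it in local coordinates at the points of $\supp(E)$, one computes that $\ov{\mu}$ reads $s\otimes(a,-a)\mapsto(s(p)a,-s(q)a)$ when $E=p+q$ with $p\ne q$, and $s\otimes a\mapsto(s(p)a,\,s'(p)a)$ when $E=2p$. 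In both cases surjectivity onto the two-dimensional target is equivalent to $\omega_A$ separating the two distinct points of $\supp(E)$, respectively separating the tangent direction at the double point---which is precisely the very ampleness hypothesis on $\omega_A$. A final diagram chase closes the argument, and this separation step is the main technical obstacle.
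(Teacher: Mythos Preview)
Your argument is correct and follows essentially the same route as the paper: induction on $\deg E$, removing one point in the inductive step, and in the base case using the surjectivity of $\mu_{\omega_A}$ to hit the codimension-two subspace $H^0(\mathcal I_E\otimes\omega_A^2(E))$ and then invoking very ampleness of $\omega_A$ to produce the two missing dimensions. The only difference is packaging: you run a snake-lemma diagram chase on the restriction sequence, whereas the paper writes down explicit sections $\alpha,\beta,\gamma$ not vanishing (or vanishing to the right order) at the points of $E$; these are the same computation. One small imprecision: in the $E=2p$ case your local formula should read $s\otimes a\mapsto (s(p)a,\,(s'(p)+m_1 s(p))a)$ for some constant $m_1$ depending on the chosen generator of $M$, but since the shear $(x,y)\mapsto (x,y-m_1x)$ is invertible this does not affect your surjectivity conclusion.
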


\begin{proof}
Since $A$ is connected, lemma \ref{a1} gives $H^1(\omega_A(D))=0$ for every effective and nonzero Cartier divisor $D$ on $A$. Thus
$$\dim H^0(\omega_A(D))=g_A+\deg D-1$$
for every such $D$. We use induction on $e:=\deg E$.
\vspace{0.2cm}

(a) Let us first assume $e=2$. We check that $\omega_A(E)$ is globally generated. Set $E=p_1+p_2$, where $p_1,p_2$ are smooth points for $A$. Since $\omega_A$ is globally generated, then $\omega_A(E)$ is globally generated outside $\{p_1,p_2\}$. We just proved that $$\dim H^0(\omega_A(p_i))=\dim H^0(\omega_A(p_1+p_2))-1.$$ Thus there is at least one section of $\omega_A(E)$ that doesn't vanish at $p_i$, with $i=1,2$. Hence $\omega_A(E)$ is globally generated. The divisor $E$ induces two inclusions $j:\omega_A\hookrightarrow\omega_A(E)$ and $j':{\omega_A}^2\hookrightarrow{\omega_A}^2(E)$,
which in turn induce the linear maps $j_*:H^0(\omega_A)\longrightarrow H^0(\omega_A(E))$
and $j'_*:H^0({\omega_A}^2)\longrightarrow H^0({\omega_A}^2(E))$
which have respectively corank $1$ and $2$. Consider the following diagram:
\begin{equation}
\xymatrix{
H^0(\omega_A)\otimes H^0(\omega_A)\ar[r]^<<<<<{id\otimes j_*}\ar[d]^{\mu_{\omega_A,\omega_A}}& H^0(\omega_A)\otimes H^0(\omega_A(E))\ar[d]^{\mu_{\omega_A,\omega_A(E)}}\\
H^0({\omega_A}^2)\ar[r]^<<<<<<<<<<<{j'_*} & H^0({\omega_A}^2(E))
}
\end{equation}

Since by hypothesis $\mu_{\omega_A,\omega_A}$ is surjective and $$\dim H^0({\omega_A}^2(E))=\dim H^0({\omega_A}^2)  +2,$$ then $j'_*(Im(\mu_{\omega_A,\omega_A}))$ is the codimension $2$ linear subspace $\Gamma:=H^0(\mathcal I_E\otimes\omega_A(E))$ of $H^0({\omega_A}^2(E))$. Since the subspace $j'_*(Im(\mu_{\omega_A,\omega_A}))$ is contained in $Im(\mu_{\omega_A,\omega_A(E)})$, in order to get the conclusions for $e=2$ it suffices to prove the
existence of two elements of $Im (\mu _{\omega _A,\omega _A(E)})$ which together with a basis
of $j'_*(Im(\mu_{\omega_A,\omega_A}))$, i.e. of $\Gamma$, are linearly independent. Since $\omega_A(E)$ is globally generated, there exists $\alpha\in H^0(\omega_A(E))$ not vanishing at $p_1$ and $p_2$. Since $\omega_A$ is globally generated, there is $\beta\in H^0(\omega_A)$ not vanishing at $p_1$ and $p_2$ as well. Since $\omega_A$ is very ample, there is $\gamma\in H^0(\omega_A)$ vanishing at $p_1$ but not at $p_2$, or, in the case when $p_1=p_2$, vanishing at $p_1$ with order exactly $1$. Now the section $\sigma:=\mu_{\omega_A,\omega_A(E)}(\gamma\otimes\alpha)$ doesn't belong to $\Gamma$; indeed, if $p_1\neq p_2$, $\sigma$ doesn't vanish at $p_2$, and if $p_1=p_2$, it vanishes at $p_1$ with order exactly $1$. Since the section $\mu_{\omega_A,\omega_A(E)}(\beta\otimes\alpha)$ does not vanish at $p_1$, it is not contained in the linear span of $\Gamma$ and $\sigma$. Thus $$\dim Im(\mu_{\omega_A,\omega_A(E)}) \ge dim\Gamma +2.$$ Thus $\mu_{\omega_A,\omega_A(E)}$ is surjective in the case $e=2$.

\vspace{0.2cm}

(b) Let now $e\geq 3$. We use induction on $e$. We fix a point $p$ contained in the support of the divisor $E$, and set $F:=E-p$. We check that $\omega_A(E)$ is globally generated, By inductive hypothesis the line bundle $\omega_A(F)$ is globally generated, hence so is $\omega_A(E)$ outside $p$. Since $\dim H^1(\omega_A(F))=0$, Riemann-Roch gives $\dim H^0(\omega_A(E))>\dim H^0(\omega_A(F))$. Thus $\omega_A(F)$ has a section not vanishing at $p$. Hence $\omega_A(E)$ is globally generated. We define two inclusions:
$\iota:\omega_A(F)\hookrightarrow\omega_A(E)$ and
$\iota':{\omega_A}^2(F)\hookrightarrow{\omega_A}^2(E)$,
which induce the linear maps $\iota_*:H^0(\omega_A(F))\longrightarrow H^0(\omega_A(E))$
and $\iota'_*:H^0({\omega_A}^2(F))\longrightarrow H^0({\omega_A}^2(E))$,
both having corank $1$.
We consider the diagram
\begin{equation}
\xymatrix{
H^0(\omega_A)\otimes H^0(\omega_A(F))\ar[r]^<<<<<{id\otimes u_*}\ar[d]^{\mu_{\omega_A,\omega_A(F)}}& H^0(\omega_A)\otimes H^0(\omega_A(E))\ar[d]^{\mu_{\omega_A,\omega_A(E)}}\\
H^0({\omega_A}^2(F))\ar[r]^<<<<<<<<<<<{u'_*} & H^0({\omega_A}^2(E))
}
\end{equation}

By the inductive hypothesis the map $\mu_{\omega_A,\omega_A(F)}$ is surjective. Thus the linear subspace $u'_*(Im(\mu_{\omega_A,\omega_A(F)}))$ has codimension $1$ in $H^0({\omega_A}^2(E))$. Fix $\eta\in H^0(\omega_A)$ not vanishing at $p$ and $\tau\in H^0(\omega_A(E))$ not vanishing at $p$. Since $\mu_{\omega_A,\omega_A(E)}(\eta\otimes\tau)$ does not vanish at $p$, it doesn't belong to $u'_*(Im(\mu_{\omega_A,\omega_A(F)}))$. Thus $\mu_{\omega_A,\omega_A(E)}$ is surjective.
\end{proof}

\section{$k$-normality in higher degree}
We are now interested in studying the surjectivity of higher order maps, i.e. of
$$Sym^k(H^0(\omega_X))\longrightarrow H^0(\omega_X^k)$$
when $k\geq 3$, but since $Sym^k(H^0(\omega_X))$ is a quotient of $H^0(\omega_X)^{\otimes k}$, we can equivalently study the surjectivity of
$$H^0(\omega_X)^{\otimes k}\longrightarrow H^0(\omega_X^k).$$
We observe that by applying part (b) in the proof of theorem \ref{teo2} we get the following:

\begin{Proposition}\label{prop1}
Let $A$ be a reduced, connected and Gorenstein curve such that $\omega_A$ is globally generated. Fix a globally generated $R\in{\rm Pic}A$ such that $H^1(R)=0$ and $\mu_{\omega_A,R}$ is surjective. Let $D\subset A_{\rm sm}$ be any effective divisor. Then $\mu_{\omega_A,R(D)}$ is surjective.
\end{Proposition}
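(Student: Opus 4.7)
The plan is to mimic step (b) of the proof of Theorem~\ref{teo2} verbatim, with $R$ playing the role that $\omega_A$ had there in the second factor, and to induct on $d:=\deg D$. The base case $d=0$ is exactly the hypothesis that $\mu_{\omega_A,R}$ is surjective. For the inductive step, with $d\ge 1$, I would fix a point $p$ in the support of $D$ (necessarily a smooth point of $A$), set $D':=D-p$, and apply the inductive hypothesis to obtain the surjectivity of $\mu_{\omega_A,R(D')}$.

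Two short cohomological checks are needed before the diagram chase. First, $H^1(R(D'))=0$: this follows from $H^1(R)=0$ together with the sequence $0\to R\to R(D')\to R(D')|_{D'}\to 0$, the quotient being zero-dimensional. Second, $\dim H^0(R(D))=\dim H^0(R(D'))+1$, which follows from $0\to R(D')\to R(D)\to k_p\to 0$ using the vanishing just obtained. The latter produces a section $\tau\in H^0(R(D))$ not vanishing at $p$, while global generation of $\omega_A$ provides a section $\eta\in H^0(\omega_A)$ not vanishing at $p$.

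Finally I would run the standard diagram chase: the inclusions $\iota:R(D')\hookrightarrow R(D)$ and $\iota':\omega_A\otimes R(D')\hookrightarrow\omega_A\otimes R(D)$ fit into a commutative square
$$\xymatrix{
H^0(\omega_A)\otimes H^0(R(D')) \ar[r]^{\mathrm{id}\otimes\iota_*}\ar[d]_{\mu_{\omega_A,R(D')}} & H^0(\omega_A)\otimes H^0(R(D)) \ar[d]^{\mu_{\omega_A,R(D)}}\\
H^0(\omega_A\otimes R(D'))\ar[r]^{\iota'_*} & H^0(\omega_A\otimes R(D))
}$$
in which $\iota'_*(H^0(\omega_A\otimes R(D')))=H^0(\mathcal{I}_p\otimes\omega_A\otimes R(D))$ is precisely the codimension-one subspace of sections of $\omega_A\otimes R(D)$ vanishing at $p$. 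By induction this subspace is contained in $\mathrm{Im}(\mu_{\omega_A,R(D)})$, and $\mu_{\omega_A,R(D)}(\eta\otimes\tau)$ supplies a section outside it, forcing surjectivity. The only mildly delicate ingredient is the propagation of $H^1(R(D'))=0$ along the induction, which is what makes the codimension-one computation go through; otherwise the argument is a direct transcription of the $e\ge 3$ step in the proof of Theorem~\ref{teo2}.
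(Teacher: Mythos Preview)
Your proof is correct and is exactly the approach the paper intends: the paper's proof consists solely of the remark that one applies part (b) of the proof of Theorem~\ref{teo2}, and you have carried out precisely that transcription, with $R$ in place of $\omega_A$ in the second factor and the base case $d=0$ supplied by hypothesis. The one point worth tightening is your phrasing around the ``codimension-one'' step for $\iota'_*$: the vanishing $H^1(R(D'))=0$ is what yields the section $\tau$ not vanishing at $p$, while the fact that $\iota'_*(H^0(\omega_A\otimes R(D')))$ has codimension one in $H^0(\omega_A\otimes R(D))$ follows automatically once you exhibit $\mu_{\omega_A,R(D)}(\eta\otimes\tau)$ outside it (the subspace of sections vanishing at a smooth point always has codimension at most one); no separate vanishing for $\omega_A\otimes R(D')$ is required.
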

As a corollary of theorem \ref{teo2}, we get the following result.

\begin{Corollary}\label{hyp(i)}
Let $A$ be a reduced, connected and Gorenstein projective curve such that $\omega_A$ is very ample and $\mu_{\omega_A}$ is surjective. Let $E\subset A_{\rm sm}$ be an effective divisor such that $\deg E\geq 2$. Then the maps $\mu_{\omega_A,{\omega_A^k(kE)}}$ are surjective for all $k\geq 2$.
\end{Corollary}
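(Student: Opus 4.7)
\vspace{.1cm}
\noindent\textbf{Proof proposal.} My plan is to reduce the statement to a single invocation of Theorem \ref{teo2} by rewriting $\omega_A^k(kE)$ in the form $\omega_A(E_k)$, where $E_k$ will be a suitable effective Cartier divisor supported on $A_{\rm sm}$ of degree at least $2$. The key observation is that one can absorb the extra factors of $\omega_A$ into the divisor by fixing, once and for all, an effective canonical divisor $K$ on $A_{\rm sm}$ and then taking $E_k:=(k-1)K+kE$; this choice gives $\omega_A(E_k)=\omega_A^k(kE)$ automatically.

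The main obstacle will be the construction of such a canonical divisor $K$ supported on $A_{\rm sm}$. To produce it I would argue as follows: since $\omega_A$ is very ample it is globally generated, and since $A$ is reduced the singular locus $\Sing(A)$ consists of finitely many points. For each $p\in\Sing(A)$ the sections of $\omega_A$ vanishing at $p$ form a proper hyperplane of $H^0(A,\omega_A)$; the complement of this finite union of hyperplanes is nonempty, the base field being algebraically closed. Any section $s$ in the complement has zero divisor $K:=(s)_0$ that is effective, supported on $A_{\rm sm}$ (hence automatically Cartier), and with $\omega_A\cong\O_A(K)$.

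Once $K$ is in hand, the rest is routine bookkeeping: for every $k\geq 2$, the divisor $E_k=(k-1)K+kE$ is effective and Cartier on $A_{\rm sm}$, satisfies $\omega_A(E_k)=\omega_A^k(kE)$ by construction, and has $\deg E_k\geq k\deg E\geq 4\geq 2$. Applying Theorem \ref{teo2} to $A$ with $E_k$ in place of $E$ then yields the surjectivity of $\mu_{\omega_A,\omega_A(E_k)}=\mu_{\omega_A,\omega_A^k(kE)}$, which is exactly the assertion of the corollary. In particular no induction on $k$ is needed: the corollary is really a direct consequence of Theorem \ref{teo2} once the auxiliary divisor $K$ has been fixed.
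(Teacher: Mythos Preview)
Your argument is correct. The paper gives no explicit proof of this corollary---it simply records it ``as a corollary of Theorem~\ref{teo2}''---so your write-up is effectively filling in the missing step. The trick of choosing a canonical divisor $K\subset A_{\rm sm}$ and writing $\omega_A^k(kE)\cong\omega_A\bigl((k-1)K+kE\bigr)$ is exactly what is needed to bring the statement under the hypotheses of Theorem~\ref{teo2}, and your justification for the existence of such a $K$ (global generation of $\omega_A$, finiteness of $\Sing(A)$, avoidance of finitely many hyperplanes over an infinite field) is sound. Note also that a section $s$ not vanishing at any point of $\Sing(A)$ is automatically not identically zero on any irreducible component of $A$ (components meet along $\Sing(A)$ when there is more than one, since $A$ is connected), so $(s)_0$ is indeed an effective Cartier divisor. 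Since $\omega_A$ very ample forces $g_A\ge 3$, one has $\deg K=2g_A-2\ge 4$ and your degree estimate for $E_k$ goes through.

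The only alternative route suggested by the surrounding text would be to combine Theorem~\ref{teo2} (for the base case) with Proposition~\ref{prop1} inductively, but that still requires the same $K$-trick to pass from $\omega_A^{k-1}((k-1)E)$ to $\omega_A^k(kE)$; your direct, induction-free argument is cleaner and equally faithful to the paper's claim that the corollary follows from Theorem~\ref{teo2}.
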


We are now going to give some definitions in order to state a result;
\begin{Definition}
{\rm A \textit{simple} $(r-1)$-secant is a configuration of $r-1$ smooth points $p_1,\ldots,p_{r-1}$ on a curve $X \subset \mathbb P^N$, spanning a $\mathbb P^{r-2}$ and such that $X\cap\mathbb P^{r-2}=\{p_1,\ldots,p_{r-1}\}$ as schemes.}
\end{Definition}

\begin{Definition}
{\rm Let $R$ be a globally generated line bundle on a curve $X$, inducing a map $h_R:X\longrightarrow\mathbb P^r$, $r:= \dim H^0(R)-1$, which is birational onto the image. A \textit{good} $(r-1)$-secant of $R$ is a set $S:=\{p_1,\ldots,p_{r-1}\}$ such that
$\dim H^0(R(-\sum_{i=1}^{r-1}p_i))=2$, $R(-\sum_{i=1}^{r-1}p_i)$ is still globally generated, and $h_R$ is an embedding at each $p_i$.}
\end{Definition}

We recall the following result from \cite{Ballico}

\begin{Lemma}[Ballico]\label{lem1.1}
Let $X$ be a one-dimensional projective locally Cohen-Macaulay scheme with $\dim H^0(\mathcal O_X)=1$ and $R\in{\rm Pic}X$ globally generated and such that $\dim H^0(R)=2$. Then the multiplication map
$$\mu_{\omega_X,R}:H^0(\omega_X)\otimes H^0(R)\longrightarrow H^0(\omega_X\otimes R)$$is surjective.
\end{Lemma}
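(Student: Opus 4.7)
The plan is to apply the classical \emph{base-point-free pencil trick}. Since $R$ is globally generated with $h^0(R)=2$, any basis $s_1,s_2$ of $H^0(R)$ induces a surjection $H^0(R)\otimes\mathcal O_X\twoheadrightarrow R$; because $R$ is a line bundle, at each point of $X$ some $s_i$ is a local generator and a local Koszul computation identifies the kernel as $R^{-1}$, yielding the short exact sequence
$$0\to R^{-1}\to H^0(R)\otimes\mathcal O_X\to R\to 0.$$
Tensoring with the dualizing sheaf $\omega_X$ (well-defined on the Cohen--Macaulay $X$) preserves exactness since $R^{\pm 1}$ is locally free, and the associated long exact sequence in cohomology reads
$$H^0(R)\otimes H^0(\omega_X)\xrightarrow{\mu}H^0(\omega_X\otimes R)\xrightarrow{\delta}H^1(\omega_X\otimes R^{-1})\xrightarrow{\alpha}H^0(R)\otimes H^1(\omega_X)\to H^1(\omega_X\otimes R)\to 0,$$
where the first map is $\mu_{\omega_X,R}$ under the basis identification. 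Consequently it suffices to show that the connecting map $\delta$ vanishes, equivalently that $\alpha$ is injective.

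Next I would pin down the three relevant dimensions via the Altman--Kleiman duality already invoked in Lemma \ref{a1}. From $h^0(\mathcal O_X)=1$ one obtains $h^1(\omega_X)=1$, so the target $H^0(R)\otimes H^1(\omega_X)$ of $\alpha$ has dimension $2$. A direct argument rules out sections of $R^{-1}$: given a nonzero $t\colon R\to\mathcal O_X$, the compositions $t\circ s_i$ lie in $H^0(\mathcal O_X)=k$ and cannot both vanish (else $t$ would vanish on the image of $(s_1,s_2)$, which is all of $R$), so some $s_i$ splits $t$ and forces $R\cong\mathcal O_X$, contradicting $h^0(R)=2$. Hence $h^0(R^{-1})=0$, and by duality $h^1(\omega_X\otimes R)=0$; an analogous computation identifies $h^1(\omega_X\otimes R^{-1})=h^0(R)=2$. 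Thus the tail of the sequence reads $H^1(\omega_X\otimes R^{-1})\twoheadrightarrow H^0(R)\otimes H^1(\omega_X)$, a surjection between two spaces of equal dimension $2$, hence an isomorphism, hence $\alpha$ is injective.

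The main subtlety, rather than a genuine obstacle, is that several steps quietly use that $R$ is a \emph{line bundle} as opposed to just a torsion-free sheaf: the identification of the kernel of the evaluation map as $R^{-1}$ and the duality computation $h^1(\omega_X\otimes R^{-1})=h^0(R)$ both rely on tensoring freely with $R^{\pm 1}$ and on the local triviality of $R$. These reduce to statements about the trivial bundle, where they are immediate, so the work is essentially organizational and the conclusion drops out cleanly from the long exact sequence.
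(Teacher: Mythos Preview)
Your argument is correct. The paper does not supply its own proof of this lemma: it is merely \emph{recalled} from \cite{Ballico}, so there is nothing to compare against line by line. Your approach via the base-point-free pencil trick is the standard one, and indeed the paper itself invokes exactly this trick in the one-line proof of the immediately following Lemma~\ref{lemmino} (which is the same statement with $\omega_X$ replaced by a general $L$ satisfying $h^1(L\otimes R^{-1})=0$). The only difference is that in the general lemma the vanishing of the obstruction group $H^1(L\otimes R^{-1})$ is taken as a hypothesis, whereas here you must extract it from $h^0(\mathcal{O}_X)=1$ via duality; your dimension count showing that $\alpha$ is a surjection between $2$-dimensional spaces, hence injective, handles this cleanly. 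The auxiliary verification that $h^0(R^{-1})=0$ is also fine: any nonzero $t\in\Hom(R,\mathcal{O}_X)$ would split one of the $s_i$ and force $R\cong\mathcal{O}_X$, contradicting $h^0(R)=2>1=h^0(\mathcal{O}_X)$.
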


\begin{Lemma}\label{lemmino}
Let A be a connected, projective curve, $L,M\in{\rm Pic}A$, $M$ globally generated, and such that $\dim H^0(M)=2$ and $\dim H^1(L\otimes M^\vee)=0$. Then $\mu_{L,M}$ is surjective.
\end{Lemma}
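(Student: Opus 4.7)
The plan is to invoke the base-point-free pencil trick. Since $M$ is globally generated with $h^0(M)=2$, the evaluation morphism $H^0(M)\otimes \O_A \to M$ is surjective, and a local computation (or the standard Koszul argument for a rank-one quotient of a rank-two trivial bundle) identifies its kernel with $M^{\vee}$. This yields the short exact sequence
\begin{equation*}
0 \longrightarrow M^{\vee} \longrightarrow H^0(A,M)\otimes \O_A \longrightarrow M \longrightarrow 0.
\end{equation*}

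Next I would tensor this sequence with the line bundle $L$ (which is flat, so exactness is preserved) to obtain
\begin{equation*}
0 \longrightarrow L\otimes M^{\vee} \longrightarrow H^0(A,M)\otimes L \longrightarrow L\otimes M \longrightarrow 0,
\end{equation*}
and then pass to the long exact sequence in cohomology:
\begin{equation*}
H^0(A,M)\otimes H^0(A,L) \longrightarrow H^0(A,L\otimes M) \longrightarrow H^1(A,L\otimes M^{\vee}).
\end{equation*}
The first map here is, by construction, the multiplication map $\mu_{M,L}$ (up to the obvious flip $\mu_{L,M}$).

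The hypothesis $\dim H^1(L\otimes M^{\vee})=0$ then forces the multiplication map to be surjective, which is exactly the conclusion. There is no real obstacle here beyond setting up the kernel of the evaluation map correctly; the only point to keep in mind is that connectedness of $A$ (so $h^0(\O_A)=1$) is implicitly used to ensure $M$ is genuinely a pencil rather than decomposing, but even without that the Koszul-type identification of the kernel as $M^{\vee}$ goes through verbatim and the argument is unaffected.
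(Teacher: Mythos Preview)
Your proof is correct and is precisely the base point free pencil trick spelled out in detail; the paper's own proof consists of the single line ``Obvious by the base point free pencil trick,'' so your approach is identical in substance.
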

\begin{proof}
Obvious by the base point free pencil trick.
\end{proof}

\begin{Proposition}
Let $A$ be a connected, Gorenstein curve with $\omega_A$ globally generated, $R\in{\rm Pic}A$ with $R$ globally generated, with $h_R$ birational onto its image and with a good $(r-1)$-secant, where
$r:= h^0(R)-1$. Then the maps $\mu_{\omega_A,R^k}$ are surjective for all $k\geq 1$.
\end{Proposition}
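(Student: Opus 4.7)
The plan is to induct on $k$, reducing the problem to surjectivity of the map $\mu_{\omega_A\otimes R^{k-1},R}$ for each $k\geq 1$ (with the convention $R^0=\mathcal O_A$, so the case $k=1$ is exactly $\mu_{\omega_A,R}$). Given this auxiliary claim, associativity of multiplication of sections implies that the composition
\[
H^0(\omega_A)\otimes H^0(R^{k-1})\otimes H^0(R)\xrightarrow{\mu_{\omega_A,R^{k-1}}\otimes\mathrm{id}}H^0(\omega_A\otimes R^{k-1})\otimes H^0(R)\xrightarrow{\mu_{\omega_A\otimes R^{k-1},R}}H^0(\omega_A\otimes R^k)
\]
coincides with $\mu_{\omega_A,R^k}\circ(\mathrm{id}\otimes\mu_{R^{k-1},R})$. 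By the inductive hypothesis and the auxiliary claim the first composition is surjective, hence $\mu_{\omega_A,R^k}$ is surjective as well.

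To establish the auxiliary claim for a fixed $k\geq 1$, set $D:=p_1+\cdots+p_{r-1}$ and $M:=R(-D)$; by the good $(r-1)$-secant hypothesis, $M$ is globally generated with $h^0(M)=2$ and every $p_i$ is a smooth point of $A$. The inclusion $M\hookrightarrow R$ makes the restriction of $\mu_{\omega_A R^{k-1},R}$ to $H^0(\omega_A R^{k-1})\otimes H^0(M)$ factor as $\mu_{\omega_A R^{k-1},M}$ followed by the inclusion $H^0(\omega_A R^k(-D))\hookrightarrow H^0(\omega_A R^k)$. I will show that the image of $\mu_{\omega_A R^{k-1},R}$ contains $H^0(\omega_A R^k(-D))$ together with $r-1$ further sections spanning the quotient modulo this subspace.

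For the first part I must show $\mu_{\omega_A R^{k-1},M}$ is surjective. When $k\geq 2$ this is the base-point-free pencil trick of Lemma \ref{lemmino}: by Serre duality $h^1(\omega_A R^{k-1}\otimes M^\vee)=h^0(R^{-(k-2)}(-D))$, which vanishes because for $k=2$ it equals $h^0(\mathcal O_A(-D))=0$ (as $D\neq 0$ is effective) and for $k\geq 3$ the degree is already negative. For $k=1$ this trick fails, but Lemma \ref{lem1.1} applies directly to the pencil $M$ on the locally Cohen--Macaulay curve $A$ (using $h^0(\mathcal O_A)=1$), yielding surjectivity of $\mu_{\omega_A,M}$. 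Moreover $H^0(\omega_A R^k(-D))$ has codimension exactly $r-1$ in $H^0(\omega_A R^k)$: Serre duality gives $h^1(\omega_A R^k(-D))=h^0(R^{-k}(D))=0$, as $h_R$ birational onto a nondegenerate curve in $\mathbb P^r$ forces $\deg R\geq r$ and hence $-k\deg R+(r-1)<0$ for $k\geq 1$.

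For the second part, the quotient $H^0(\omega_A R^k)/H^0(\omega_A R^k(-D))$ embeds into $\bigoplus_{i=1}^{r-1}(\omega_A R^k)|_{p_i}\cong k^{r-1}$. Since $h^0(R(-D))=2$, the $r-1$ points impose independent conditions on $H^0(R)$; consequently $h^0(R(-\sum_{j\neq i}p_j))=3>h^0(R(-D))$ for each $i$, so there exists $\beta_i\in H^0(R(-\sum_{j\neq i}p_j))$ nonzero at $p_i$. Since $\omega_A\otimes R^{k-1}$ is globally generated, I can also pick $\alpha_i\in H^0(\omega_A\otimes R^{k-1})$ nonzero at $p_i$. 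The section $\mu_{\omega_A R^{k-1},R}(\alpha_i\otimes\beta_i)$ then vanishes at every $p_j$ with $j\neq i$ and is nonzero at $p_i$, so these $r-1$ sections hit distinct basis directions of the quotient, completing the induction. The main subtlety I expect is that the pencil trick (Lemma \ref{lemmino}) genuinely fails at $k=1$, so one must invoke Lemma \ref{lem1.1} there instead.
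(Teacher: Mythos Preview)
Your proof is correct and follows essentially the same strategy as the paper's: reduce by induction to the surjectivity of $\mu_{\omega_A\otimes R^{k-1},R}$, use the pencil $M=R(-D)$ together with Lemma~\ref{lem1.1} (for $k=1$) or the base-point-free pencil trick (for $k\ge 2$) to fill up $H^0(\omega_A\otimes R^k(-D))$, and then produce the remaining sections using that the $p_i$ impose independent conditions. The only cosmetic difference is that the paper adds the points $q_1,\dots,q_{r-1}$ one at a time, whereas you construct all $r-1$ complementary sections simultaneously via your $\alpha_i\otimes\beta_i$.

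One caution: your justification that $h^0(R^{-k}(D))=0$ via ``negative total degree'' is not valid when $A$ is reducible (a line bundle of negative total degree on a reducible curve can still have sections). Fortunately this codimension computation is superfluous to your argument: the embedding of the quotient into $\bigoplus_i (\omega_A R^k)|_{p_i}$ already bounds its dimension by $r-1$, and your $r-1$ sections hit independent directions there, which is all you need. (If you want the vanishing anyway, note that $M$ and $R$ are globally generated and $M\not\cong\mathcal O_A$, so $R^{k-1}\otimes M\cong\mathcal O_A(E)$ for some nonzero effective $E$, and Lemma~\ref{a1} gives $h^1(\omega_A R^k(-D))=h^1(\omega_A(E))=0$.)
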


\begin{proof}
Fix a good $(r-1)$-secant set $S=\{q_1,\ldots,q_{r-1}\}$. Thus the linear span $\langle h_R(q_1),\dots ,h_R(q_{r-1})\rangle$ has dimension  $r-2$, $h_R(A)\cap \langle h_R(q_1),\dots ,h_R(q_{r-1})\rangle = \{h_R(q_1),\dots ,h_R(q_{r-1})\}$ as schemes
and $$h_R^{-1}(\{h_R(q_1),\dots ,h_R(q_{r-1})\}) = \{q_1,\dots ,q_{r-1}\}.$$ Set $M:= R(-S)$. We start by examining the case $k=1$. Since $\omega_A$ is globally generated, we have $A\ne \mathbb {P}^1$. Since the map $h_R$ induced by $R$ is birational onto its image, we have $r\ge 2$. The first condition on the good $(r-1)$-secant points gives $h^0(M)=2$. The last two conditions give that $M$ is globally generated.
Since $h^0(R) = h^0(M)+r-1$, we also get $h^0(M(q_1)) = h^0(M)+1$. Thus there is $\eta \in H^0(M(q_1))$ such that $\eta (q_1)\ne 0$. The factorization shown in the following diagram
$$
\xymatrix{
H^0(\omega_A)\otimes H^0(M)\ar@{->>}[r]\ar[d]&H^0(\omega_A\otimes M)\ar[d]^j\\
H^0(\omega_A)\otimes H^0(M(q_1))\ar[r]^<<<<\varphi & H^0(\omega_A\otimes M(q_1))
}
$$

shows that the image of $\varphi$ contains a copy of $H^0(\omega_A\otimes M)$ as a hyperplane. Since  $q_1$ is not a base point for $M$ and $\omega _A$ is globally generated,
there is $\sigma \in H^0(\omega_A)\otimes H^0(M(q_1))$ that doesn't vanish on $q_1$. Hence the image of $\sigma$ via $\varphi$ doesn't vanish on $q_1$, and we get the surjectivity of $\varphi$. Repeating this argument for all the points $q_1,\ldots,q_{r-1}$ adding them one by one we get that $\mu_{\omega_A,R}$ is surjective.

Now we assume $k\geq 2$ and use induction on $k$. The inductive assumption gives the surjectivity of the map
$H^0(\omega_A)\otimes H^0(R^{k-1})\longrightarrow H^0(\omega_A\otimes R^{k-1})$.
We use the following commutative diagram:
$$
\xymatrix{
H^0(\omega_A)\otimes H^0(R^{k-1})\otimes H^0(R^k)\ar@{->>}[r]^<<<<\psi\ar[d]&H^0(\omega_A\otimes R^{k-1})\otimes H^0(R)\ar[d]^\phi\\
H^0(\omega_A)\otimes H^0(R^{k})\ar[r]^<<<<<<<<<<<<<\mu & H^0(\omega_A\otimes R^{k})
}
$$

It suffices to prove that $\phi$ is surjective, indeed, if it is, then $\phi\circ\psi$ is surjective, hence $\mu$ must be surjective. We proved that $M$ is globally generated and $\dim H^0(M)=2$. Moreover we notice that
$$\omega_A\otimes R^{k-1}\otimes M^\vee=\omega_A\otimes R^{k-2}(S).$$
Since $k\geq 2$ and $S\neq\emptyset$, we have that $\dim H^1(\omega_A\otimes R^{k-2}(S)=0$.
The base point free pencil trick applied to $\omega_A\otimes R^{k-1}$ and $M$ gives the surjectivity of $\mu_{\omega_A\otimes R^{k-1},M}$. By Riemann-Roch theorem we get that
$$\dim H^0(\omega_A\otimes R^k)=\dim H^0(\omega_A\otimes R^{k-1}\otimes M)+\sharp S.$$
Arguing as in case $k=1$ we get that the map $\mu_{\omega_A,R^k}$ is surjective.
\end{proof}

\begin{Definition}
{\rm We say that a line bundle $L$ on a curve $X$ is \textit{$k$-normally generated} if the map
$$H^0(\omega_X)^{\otimes k}\longrightarrow H^0(\omega_X^k)$$ is surjective.}
 \end{Definition}
For instance ``quadratically normal'' means ``linearly normal'' plus ``$2$-normally generated''.

\begin{Proposition}\label{kNormGen}
Let $X$ be a connected, reduced, Gorenstein projective curve with planar singularities and $\omega_X$ very ample. Assume that $X=A\cup B$, with $A,B$ connected and smooth at $D:=A\cap B$. Fix $k\geq 3$; if
\begin{itemize}
\item[(i)]$\omega_X$ is $(k-1)$-normally generated,
\item[(ii)]$\mu_{\omega_A,{\omega_X^j}|_A}$ is surjective for $1\leq j\leq k$,
\item[(iii)]${\omega_X}|_B$ is $j$-normally generated for $1\leq j\leq k$,
\end{itemize}
then
$\omega_X$ is $k$-normally generated.
\end{Proposition}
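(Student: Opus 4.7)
The plan is to mirror the argument for Proposition \ref{teo1}, with the quadratic map replaced by the $k$-th order one. Let $\mu:H^0(\omega_X)^{\otimes k}\to H^0(\omega_X^k)$ denote the $k$-fold multiplication map and let $\rho_B^k:H^0(\omega_X^k)\to H^0({\omega_X^k}|_B)$ be the restriction to $B$. As in Proposition \ref{teo1}, the surjectivity of $\mu$ reduces by elementary linear algebra to two statements: (a) $\rho_B^k\circ\mu$ is surjective, and (b) $\Ker(\rho_B^k)\subseteq\Ima(\mu)$.

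For (a) I would factor $\rho_B^k\circ\mu$ through $H^0({\omega_X}|_B)^{\otimes k}$ via $(\rho_B)^{\otimes k}$. Each copy of $\rho_B$ is surjective by Lemma \ref{lem1}, so $(\rho_B)^{\otimes k}$ is surjective; the subsequent $k$-fold multiplication $H^0({\omega_X}|_B)^{\otimes k}\to H^0({\omega_X^k}|_B)$ is surjective by hypothesis (iii) with $j=k$, giving (a).

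For (b) the key identification is $\mathcal I_B\otimes\omega_X=\omega_A$, the symmetric counterpart of the calculation in Lemma \ref{lem1}. It yields $\Ker(\rho_B^k)=H^0(X,\mathcal I_B\otimes\omega_X^k)\cong H^0(A,\omega_A\otimes{\omega_X^{k-1}}|_A)$. I would then restrict $\mu$ to $H^0(\mathcal I_B\otimes\omega_X)\otimes H^0(\omega_X)^{\otimes(k-1)}$, which lands inside $H^0(\mathcal I_B\otimes\omega_X^k)$ since $\mathcal I_B$ is an ideal, and show it is surjective via the commutative diagram
$$
\xymatrix{
H^0(\mathcal I_B\otimes\omega_X)\otimes H^0(\omega_X)^{\otimes(k-1)}\ar[r]\ar[d]& H^0(\mathcal I_B\otimes\omega_X^k)\ar[d]^{\cong}\\
H^0(\omega_A)\otimes H^0({\omega_X^{k-1}}|_A)\ar[r]^<<<<<{\mu_{\omega_A,{\omega_X^{k-1}}|_A}}& H^0(\omega_A\otimes{\omega_X^{k-1}}|_A).
}
$$
The bottom row is surjective by hypothesis (ii) with $j=k-1$, and the left vertical is the tensor product of the identification $H^0(\mathcal I_B\otimes\omega_X)=H^0(\omega_A)$ on the first factor with the composition $H^0(\omega_X)^{\otimes(k-1)}\to H^0(\omega_X^{k-1})\to H^0({\omega_X^{k-1}}|_A)$ on the second.

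The step I expect to be the main obstacle, the only one not already parallel to Proposition \ref{teo1}, is the surjectivity of the composition on the second factor. Its first arrow is surjective by hypothesis (i), but its second arrow $\rho_A^{k-1}$ requires the vanishing $H^1(B,\omega_B\otimes{\omega_X^{k-2}}|_B)=0$, via the short exact sequence $0\to\omega_B\otimes\omega_X^{k-2}\to\omega_X^{k-1}\to{\omega_X^{k-1}}|_A\to 0$ on $X$. Since ${\omega_X}|_B$ is very ample on $B$ and $k\geq 3$, the line bundle $({\omega_X}|_B)^{k-2}$ has a nonzero global section, hence is represented by a nonempty effective Cartier divisor $E\subset B$; Lemma \ref{a1} then gives $H^1(B,\omega_B(E))=0$. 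With this, the left vertical of the diagram is surjective, so the top row is surjective, which is (b); combined with (a) this yields the surjectivity of $\mu$. Aside from this small $H^1$-vanishing, the argument is purely organisational: keeping the two ideal-sheaf identifications straight and matching each of the three hypotheses to the correct slot in the diagram.
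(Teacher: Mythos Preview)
Your argument is correct and follows the same template as the paper's: split via the restriction $\rho_B^k$, handle the image using (iii), and handle the kernel via $\mathcal I_B\otimes\omega_X\cong\omega_A$ together with (ii) and the vanishing $H^1(B,\omega_B\otimes{\omega_X^{k-2}}|_B)=0$ from Lemma~\ref{a1}. The only organisational difference is that the paper first uses (i) to reduce to the map $\widetilde\mu:H^0(\omega_X)\otimes H^0(\omega_X^{k-1})\to H^0(\omega_X^k)$ and then performs the (a)/(b) split on $\widetilde\mu$; consequently, in its step (a) it must also prove the surjectivity of $H^0(\omega_X^{k-1})\to H^0({\omega_X^{k-1}}|_B)$, via the symmetric vanishing $H^1(A,\omega_A\otimes{\omega_X^{k-2}}|_A)=0$. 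Your variant sidesteps that second vanishing by applying $(\rho_B)^{\otimes k}$ and Lemma~\ref{lem1} directly in (a), deferring the use of (i) to step (b) --- a mild streamlining rather than a genuinely different route.
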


\begin{proof}
The proof is similar to the one of proposition \ref{teo1}; we just change notation slightly, denoting the multiplication maps in an easier way. We notice that in order to prove that the map
$$H^0(\omega_X)^{\otimes k}\xrightarrow{\mu_k} H^0(\omega_X^k)$$ is surjective, by factorizing we get
$$H^0(\omega_X)\otimes H^0(\omega_X)^{\otimes {k-1}}\xrightarrow{\mu\otimes\mu_{k-1}} H^0(\omega_X)\otimes H^0(\omega_X^{k-1})\xrightarrow{\widetilde\mu} H^0(\omega_X^k),$$
so it suffices to see that the map $\widetilde\mu$ is surjective. We consider the diagram
\begin{equation}\label{diag}
\xymatrix{H^0(\omega_X)\otimes H^0(\omega_X^{k-1})\ar[r]^<<<<<<<{\widetilde\mu}\ar[d]^\eta
&H^0(\omega_X^k)\ar[d]^\psi\\
H^0({\omega_X}|_B)\otimes H^0({\omega_X^{k-1}}|_B)\ar[r]^<<<<\phi&H^0({\omega_X^k}|_B)
}
\end{equation}

where the map $\widetilde\mu=\mu_{\omega_X,\omega_X^{k-1}}$.
We know that $\phi$ is surjective by (iii), and if
\begin{itemize}
\item[(a)]$\psi\circ\widetilde\mu$ is surjective,
\item[(b)]${\rm Ker}\psi\subseteq{\rm Im}\widetilde\mu$,
\end{itemize}
then by linear algebra we get that $\widetilde\mu$ is surjective. In order to prove (a), by (\ref{diag}) we equivalently show that the map $\phi\circ\eta$ is surjective. We claim that $\eta$ is surjective. Indeed, since $\omega _X$
is locally free we have the exact sequence
$$0\rightarrow\mathcal I_B\otimes\omega_X\rightarrow\omega_X\rightarrow{\omega_X}|_B\rightarrow 0.$$
If we tensor by $\omega_X^{k-2}$, we get
$$0\rightarrow\mathcal I_B\otimes\omega_X^{k-1}\rightarrow\omega_X^{k-1}\rightarrow{\omega_X}|_B\otimes\omega_X^{k-2}\rightarrow 0,$$
which is equivalent to
$$0\rightarrow\omega_A\otimes\omega_X^{k-2}\rightarrow\omega_X^{k-1}\rightarrow{\omega_X^{k-1}}|_B\rightarrow 0,$$
The corresponding long exact sequence in cohomology is
$$0\rightarrow H^0(\omega_A\otimes\omega_X^{k-2})\rightarrow H^0(\omega_X^{k-1})\rightarrow H^0({\omega_X^{k-1}}|_B)\rightarrow H^1(\omega_A\otimes\omega_X^{k-2})\rightarrow $$
$$\rightarrow H^1(\omega_X^{k-1})\rightarrow H^1({\omega_X^{k-1}}|_B)\rightarrow\cdots$$
Now we consider $H^1(\omega_A\otimes\omega_X^{k-2})$; we have that $\omega_A\otimes\omega_X^{k-2}=\omega_A\otimes{\omega_X^{k-2}}|_A=\omega_A\otimes{\omega_A^{k-2}}((k-2)D)$, hence
by lemma \ref{a1} we obtain that $H^1(\omega_A\otimes{\omega_A^{k-2}}((k-2)D))=0$, therefore the map
$$H^0(\omega_X^{k-1})\rightarrow H^0({\omega_X^{k-1}}|_B)$$ is surjective, and we get (a).

Now we want to prove (b). We notice that
$${\rm Ker}\psi=H^0(\mathcal I_B\otimes\omega_X^k)$$

and set
$$\mu:=\widetilde\mu|_{H^0(X,\mathcal{I}_B\otimes\omega_X)\otimes H^0(\omega_X^{k-1})}.$$
We have the following commutative diagram:
\begin{equation}\label{diagbis}
\xymatrix{
H^0(\mathcal{I}_B\otimes\omega_X)\otimes H^0(\omega_X^{k-1})\ar[r]^<<<<<\mu \ar[d]^\gamma
&H^0(\mathcal{I}_B\otimes{\omega_X}^k)\ar[d]^\cong\\
H^0({\omega_A})\otimes H^0({\omega_X^{k-1}}|_A)\ar[r]^<<<<<<{\mu_{\omega_A,{\omega_X^{k-1}}|_A}}
&H^0(\omega_A\otimes{\omega_X^{k-1}}|_A)}
\end{equation}

Now we have that $\mathcal{I}_B\otimes\omega_X\cong\omega_A$ and applying the previous argument to $A$ rather than to $B$, we obtain that
$$H^0(\omega_X^{k-1})\rightarrow H^0({\omega_X^{k-1}}|_A)$$ is surjective, hence so is $\gamma$ in (\ref{diagbis}). Applying hypothesis (ii) we have that $\mu$ is surjective, hence as in the proof of \ref{teo1}, we get that ${\rm Ker}\psi={\rm Im}\mu\subseteq{\rm Im}\widetilde\mu$.
\end{proof}

We notice that when $k$ grows, the hypothesis in proposition \ref{kNormGen} can be simplified:

\begin{Proposition}\label{k>3}
Let $X$ be a connected, reduced, Gorenstein projective curve of genus $g$, with $\omega_X$ globally generated. Fix $k\geq 4$ and assume that $\omega_X$ is $(k-1)$-normally generated. Then $\omega_X$ is $k$-normally generated.
\end{Proposition}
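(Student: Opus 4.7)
The plan is to reduce $k$-normal generation to the surjectivity of $\mu_{\omega_X,\omega_X^{k-1}}$ and then to apply Proposition \ref{prop1}. First I would factor the map $\mu_k$ as
\[
H^0(\omega_X)^{\otimes k} \,\cong\, H^0(\omega_X)\otimes H^0(\omega_X)^{\otimes (k-1)} \xrightarrow{\mathrm{id}\otimes\mu_{k-1}} H^0(\omega_X)\otimes H^0(\omega_X^{k-1}) \xrightarrow{\mu_{\omega_X,\omega_X^{k-1}}} H^0(\omega_X^k),
\]
where the first arrow is surjective by the $(k-1)$-normal generation assumption; hence it suffices to prove that $\mu_{\omega_X,\omega_X^{k-1}}$ is surjective. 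A parallel factorization at level $k-1$ also yields the surjectivity of $\mu_{\omega_X,\omega_X^{k-2}}$: the composition $H^0(\omega_X)\otimes H^0(\omega_X)^{\otimes(k-2)}\to H^0(\omega_X)\otimes H^0(\omega_X^{k-2})\xrightarrow{\mu_{\omega_X,\omega_X^{k-2}}} H^0(\omega_X^{k-1})$ equals $\mu_{k-1}$ and is surjective, which forces surjectivity of the second arrow.

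With $\mu_{\omega_X,\omega_X^{k-2}}$ in hand, I would apply Proposition \ref{prop1} to $A=X$ and $R:=\omega_X^{k-2}$. The three hypotheses on $R$ are: global generation (immediate from $\omega_X$ globally generated), $\mu_{\omega_X,R}$ surjective (just observed), and $H^1(R)=H^1(\omega_X^{k-2})=0$. Granting the vanishing, Proposition \ref{prop1} gives $\mu_{\omega_X,R(D)}$ surjective for every effective divisor $D\subset X_{\rm sm}$. I would then take $D$ to be the zero locus of a sufficiently general section of $\omega_X$: since $\omega_X$ is globally generated and $X_{\rm sing}$ is a finite set, a generic section vanishes only at smooth points, giving an effective Cartier divisor $D\subset X_{\rm sm}$ with $\mathcal{O}_X(D)\cong\omega_X$, and hence $R(D)\cong\omega_X^{k-1}$. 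This yields the desired surjectivity of $\mu_{\omega_X,\omega_X^{k-1}}$.

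The delicate point is the vanishing $H^1(\omega_X^{k-2})=0$ for $k\geq 4$. By Serre duality on the Gorenstein curve $X$ it is equivalent to $H^0(\omega_X^{3-k})=0$, where $3-k\leq -1$. The idea is that, because $\omega_X$ is globally generated, its degree is non-negative on every subcurve; on any subcurve where this degree is strictly positive, the negative power $\omega_X^{3-k}$ has strictly negative degree and admits no nonzero section, so any global section of $\omega_X^{3-k}$ would have to be supported on the remaining ``exceptional'' subcurves where $\omega_X$ restricts trivially. Since $X$ is connected and such exceptional subcurves must meet the positive-degree ones (assuming $g\geq 2$, so that the positive-degree locus is nonempty), a gluing/connectedness argument then forces the section to vanish on all of $X$. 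Establishing this cohomological vanishing in the general Gorenstein setting is the main technical hurdle of the proof.
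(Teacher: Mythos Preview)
Your reduction to the surjectivity of $\mu_{\omega_X,\omega_X^{k-1}}$ is exactly the paper's first step. From there, however, the paper simply invokes Proposition~8 of \cite{Franciosi3} with $\mathcal F=\omega_X$ and $\mathcal H=\omega_X^{k-1}$, which requires $\mathcal F$ globally generated and $H^1(\mathcal H\otimes\mathcal F^{-1})=H^1(\omega_X^{k-2})=0$. Your route is different and more self-contained: you bootstrap from the surjectivity of $\mu_{\omega_X,\omega_X^{k-2}}$ (which you correctly extract from $(k-1)$-normal generation) and then apply the paper's own Proposition~\ref{prop1} with $R=\omega_X^{k-2}$ and $D$ the zero divisor of a general section of $\omega_X$. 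Since $\omega_X$ is globally generated and $X_{\rm sing}$ is finite, such a section misses the singular locus, so $D\subset X_{\rm sm}$ and $\mathcal O_X(D)\cong\omega_X$; this is a nice trick that avoids the external reference. Both approaches hinge on the same vanishing $H^1(\omega_X^{k-2})=0$, which the paper asserts without argument.

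Your sketch of the vanishing is on the right track but can be made much shorter and cleaner than the subcurve-by-subcurve analysis you outline. By Serre duality it suffices to show $H^0(\omega_X^{3-k})=0$ for $k\ge 4$. If $0\ne s\in H^0(\omega_X^{3-k})$, pick $p$ with $s(p)\ne 0$; since $\omega_X^{k-3}$ is globally generated there is $t\in H^0(\omega_X^{k-3})$ with $t(p)\ne 0$, so $st\in H^0(\mathcal O_X)=k$ is a nonzero constant. Hence $s$ is nowhere zero and $\omega_X^{k-3}\cong\mathcal O_X$, forcing $\deg\omega_X=2g-2=0$, i.e.\ $g=1$; in that case $\omega_X\cong\mathcal O_X$ and the statement is trivial. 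This replaces your ``main technical hurdle'' by a two-line argument.
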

\begin{proof}
As in the proof of \ref{kNormGen}, looking at the factorization
$$H^0(\omega_X)\otimes H^0(\omega_X)^{\otimes {k-1}}\xrightarrow{\mu\otimes\mu_{k-1}} H^0(\omega_X)\otimes H^0(\omega_X^{k-1})\xrightarrow{\mu_{\omega_X,\omega_X^{k-1}}} H^0(\omega_X^k),$$
by hypothesis it suffices to prove that $\mu_{\omega_X,\omega_X^{k-1}}$ is surjective.
We use Proposition 8 in \cite{Franciosi3} in the following way:
we take $\mathcal F:=\omega_X$ and $\mathcal H:=\omega_X^{k-1}$, so we have that $H^0(\mathcal F)$ is globally generated. Moreover we have that

$$H^1(\mathcal H\otimes\mathcal F^{-1})=H^1(\omega_X^{k-2})=0$$
if $k\geq 4$, so we get that the $\mu_{\omega_X,\omega_X^{k-1}}$ is surjective.
\end{proof}

\section{Applications}

In the sequel we are going to study some cases where we can apply our results.

\begin{Lemma}\label{a2}
Let $Z$ be a connected and Gorenstein curve such that $\omega _Z$ is globally generated. Let $D \subset Z_{\rm sm}$ be an effective Cartier divisor such that
$\deg (D) \ge 2$. Then $\omega _Z(D)$ is globally generated.
\end{Lemma}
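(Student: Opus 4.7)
The plan is to check global generation at a point $p \in Z$ by splitting into two cases: $p \notin \supp(D)$ and $p \in \supp(D)$. The first case is immediate from the hypothesis that $\omega_Z$ is globally generated together with the natural inclusion $\omega_Z \hookrightarrow \omega_Z(D)$ induced by the effective divisor $D$: any section of $\omega_Z$ not vanishing at $p$ lifts to a section of $\omega_Z(D)$ not vanishing at $p$. So the substantive case is $p \in \supp(D)$.

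For $p \in \supp(D)$, because $p$ is a smooth point of $Z$ and $D$ is an effective Cartier divisor whose support contains $p$, we may write $D = p + D'$ where $D'$ is an effective Cartier divisor; since $\deg(D) \ge 2$, we have $\deg(D') \ge 1$, so $D'$ is nonzero, and similarly $D$ itself is nonzero. The key step is then to apply Lemma \ref{a1} to both $D$ and $D'$: this yields $h^1(\omega_Z(D)) = 0$ and $h^1(\omega_Z(D')) = 0$. Combining these vanishings with Riemann--Roch on the Gorenstein curve $Z$ (note $\deg\omega_Z(D) = 2g_Z - 2 + \deg D$, so $\chi(\omega_Z(D)) = g_Z - 1 + \deg D$), I get
\[
h^0(\omega_Z(D)) - h^0(\omega_Z(D')) = \deg D - \deg D' = 1.
\]

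Finally I interpret this dimension jump: the inclusion $\omega_Z(D') \hookrightarrow \omega_Z(D)$ identifies $H^0(\omega_Z(D'))$ with the subspace $H^0(\mathcal{I}_p \otimes \omega_Z(D)) \subset H^0(\omega_Z(D))$ of sections vanishing at $p$ (this uses that $p$ is a smooth point, so $\mathcal{I}_p$ is an invertible ideal and $D - p = D'$ as Cartier divisors). Since the quotient has dimension one, there must exist a section of $\omega_Z(D)$ that does not vanish at $p$, which together with the first case proves global generation.

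I do not expect a serious obstacle: the only point that requires care is verifying that $D - p$ is again an effective nonzero Cartier divisor, which is exactly why the two hypotheses $p \in Z_{\rm sm}$ and $\deg(D) \ge 2$ are both needed (the smoothness guarantees Cartierness after subtracting $p$, and $\deg(D) \ge 2$ guarantees that $D - p$ is still nonzero so that Lemma \ref{a1} applies to it).
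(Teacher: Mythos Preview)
Your proof is correct and follows essentially the same route as the paper's: split into $p\notin\supp(D)$ (handled by global generation of $\omega_Z$) and $p\in\supp(D)$, then subtract $p$ to get an effective nonzero $D'$, invoke Lemma~\ref{a1} to kill the relevant $h^1$, and use Riemann--Roch to produce the dimension jump $h^0(\omega_Z(D))=h^0(\omega_Z(D'))+1$. The only cosmetic difference is that you apply Lemma~\ref{a1} to both $D$ and $D'$ and subtract Euler characteristics, whereas the paper applies it only to $D'$ (the vanishing $h^1(\omega_Z(D'))=0$ already suffices via the short exact sequence $0\to\omega_Z(D')\to\omega_Z(D)\to k(p)\to 0$).
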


\begin{proof}
Since $\omega _Z(D)$ is a line bundle, it is globally generated if and only if for every $q\in Z$
there is $s\in H^0(\omega _Z(D))$ such that $s(q)\ne 0$. Since $\omega _Z$ is assumed to be globally generated and $D$ is effective, the sheaf $\omega _Z(D)$ is globally generated outside the finitely many points appearing in $supp(D)$. Fix $p\in supp(D)$ and set $D':= \mathcal {I}_p\otimes D$. Since $p\in X_{sm}$, $D'$ is a Cartier divisor of degree $\deg (D)-1$. Moreover, since
$p\in supp(D)$, $D'$ is effective. Thus Lemma \ref{a1} gives $h^1(\omega _Z(D'))=0$. Riemann-Roch gives $h^0(\omega _Z(D)) = h^0(\omega _Z(D'))+1$. Thus there
is $s\in H^0(\omega _Z(D))$ such that $s(p) \ne 0$.
\end{proof}

\begin{Corollary}\label{cor2comp}
Let $X$ be a connected reduced curve with two irreducible non-rational components $C_1,C_2$ meeting at planar singularities for $X$ and both smooth at $C_1\cap C_2$; assume that $\omega_X$ is very ample. Then $X$ is canonically embedded is projectively normal.
\end{Corollary}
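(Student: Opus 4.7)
The plan is to establish $k$-normal generation of $\omega_X$ for every $k\geq 1$: linear normality ($k=1$) is automatic for the canonical embedding via the complete linear series, quadratic normality ($k=2$) will follow from Theorem \ref{teorema}, $3$-normal generation from Proposition \ref{kNormGen}, and the cases $k\geq 4$ by iterating Proposition \ref{k>3}. Throughout, set $A:=C_1$ and $B:=C_2$. Since $X$ is reduced with two irreducible components meeting at planar singularities, we have $X_{\rm mult}=C_1\cap C_2$, and $A,B$ are integral (so in particular connected) and smooth at $D:=A\cap B$ by hypothesis, so all the structural hypotheses of Theorem \ref{teorema} are met except possibly the multiplication-map condition.

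For quadratic normality, the point is to verify hypothesis (i) of Theorem \ref{teorema}, i.e.\ surjectivity of $\mu_{\omega_A,\,{\omega_X}|_A}$. The key observation is that, although $\omega_A=\omega_{C_1}$ may fail to be very ample on $A$, the restriction ${\omega_X}|_A=\omega_A(D)$ is the restriction of the very ample line bundle $\omega_X$, hence very ample on $A$; in particular it is globally generated and the induced morphism is a closed embedding, a fortiori birational onto its image. Since $A$ is integral and non-rational, Lemma \ref{lem1.2} applied with $R:={\omega_X}|_A$ yields the desired surjectivity, and Theorem \ref{teorema} then gives quadratic normality.

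For $3$-normal generation we apply Proposition \ref{kNormGen} with $k=3$. Condition (i) holds by the previous paragraph. For (ii), note that for each $j=1,2,3$ the sheaf ${\omega_X^j}|_A=({\omega_X}|_A)^j$ is a positive power of the very ample bundle ${\omega_X}|_A$, hence very ample on the integral non-rational curve $A$, so Lemma \ref{lem1.2} again yields surjectivity of $\mu_{\omega_A,\,{\omega_X^j}|_A}$ for all $j$. Condition (iii) is immediate from Lemma \ref{Bnormgen}, which tells us that ${\omega_X}|_B$ is normally generated on $B$, hence $j$-normally generated for every $j\geq 1$. So $\omega_X$ is $3$-normally generated.

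Finally, the cases $k\geq 4$ are handled by induction: since $\omega_X$ is globally generated (being very ample) and $(k-1)$-normally generated by the inductive hypothesis, Proposition \ref{k>3} gives $k$-normal generation. Collecting all $k$ proves projective normality. The conceptual crux of the argument—and the step most worth flagging—is condition (ii) of Proposition \ref{kNormGen}: Corollary \ref{hyp(i)} cannot be invoked directly because $\omega_A$ is not assumed very ample, but the passage to the very ample restriction ${\omega_X}|_A$ coupled with Lemma \ref{lem1.2} sidesteps this difficulty uniformly in $j$.
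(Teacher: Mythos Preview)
Your proof is correct and follows the same overall skeleton as the paper's: Theorem \ref{teorema} for quadratic normality, Proposition \ref{kNormGen} for $k=3$, and Proposition \ref{k>3} for $k\geq 4$. The one substantive difference lies in verifying condition (ii) of Proposition \ref{kNormGen}. The paper invokes Corollary \ref{hyp(i)}, which requires $\omega_A$ to be very ample and $\mu_{\omega_A}$ surjective---hypotheses that are not guaranteed merely by ``non-rational'' (e.g.\ when $C_1$ is hyperelliptic or of genus $1$); you instead apply Lemma \ref{lem1.2} directly to each $R=({\omega_X}|_A)^j$, using only that $A$ is integral, $A\neq\mathbb{P}^1$, and that the restriction of a very ample bundle is very ample (hence $h_R$ is an embedding). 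Your route is cleaner and sidesteps that issue; indeed the paper itself adopts exactly your argument in the proof of the more general Corollary \ref{cor3connessa}.
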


\begin{proof}First of all we have to prove that $X$ is quadratically normal, so let us use the set-up of proposition \ref{teo1}, and set $A=C_1$, $B=C_2$. We look at hypothesis (i) and (ii) of the theorem; hypothesis (i) is verified by applying \ref{lem1.2} to $C_1$. Indeed in our situation $R={\omega_X}|_{C_1}$, i.e. $R=\omega_{C_1}(D)$ where $D$ is the divisor on $C_1$ and $C_2$ corresponding to $C_1\cap C_2$. Hence by lemma \ref{a2} we have that $R$ is globally generated and birational onto the image, and we get (i). Concerning (ii), it suffices to apply \ref{Bnormgen}, and then by \ref{teo1} we obtain that $X$ is quadratically generated. Now we want to study the $3$-normal generation of $X$. So we look at the hypothesis of \ref{kNormGen}: we know that $\omega_X$ is quadratically normal, and of course (iii) holds by lemma \ref{Bnormgen}. So it remains to prove (ii): but this is a consequence of corollary \ref{hyp(i)}, indeed we have that $\mu_{\omega_A}$ is surjective since $A$ is irreducible and hence projectively normal, moreover, being $\omega_X$ very ample, $A\cdot B\geq 3$. Now when $k\geq 4$ we just apply \ref{k>3} and get the conclusions.
\end{proof}

\begin{Remark}
{\rm We observe that in the case of nodal connected curves with two non-rational irreducible components, the corollary above says that if the two components $C_1$ and $C_2$ meet at least at $3$ points, then $X=C_1\cup C_2$ canonically embedded is projectively normal. The corollary
leaves out the curves having at least one $\mathbb P^1$ as a component, and in particular binary curves (i.e. a curve $X$ is binary if it is composed of two $\mathbb P^1$'s meeting at $g+1$ points where $g$ is the genus of $X$), but for the latter special class of curves we can use \cite{Schreyer} (see \ref{schreyer}) and easily get projective normality. Concerning the class of curves $X=C_1\cup C_2$ with $C_1\neq\mathbb P^1$ and $C_2=\mathbb P^1$, we get the projective normality by applying the same proof as in corollary \ref{cor2comp}, once we denote by $A$ the component $C_1$. Indeed the hypothesis $C_1\neq\mathbb P^1$ is used only when we apply \ref{lem1.2} to $A$.}
\end{Remark}

We can generalize the previous result:

\begin{Corollary}\label{cor3connessa}
Let $X$ be a connected reduced Gorenstein curve with $\omega_X$ very ample and with planar singularities. Assume that $X=A\cup B$ with $A\neq\mathbb P^1$ irreducible and let $B$ be a connected curve. Let $A$ and $B$ be smooth at $A\cap B$. Then $\omega_X$ is $k$-normally generated for any $k\geq 2$.
\end{Corollary}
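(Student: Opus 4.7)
The plan is to mirror the proof of Corollary \ref{cor2comp} step by step, since the added flexibility of allowing $B$ to be reducible (while still connected) does not affect the applicability of any of the key lemmas and propositions invoked there. In particular, Lemma \ref{Bnormgen}, Proposition \ref{teo1}, Theorem \ref{teorema}, and Proposition \ref{kNormGen} all treat $B$ merely as a connected subcurve, not necessarily irreducible.

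For the base case $k=2$ I would apply Theorem \ref{teorema}. The hypothesis to verify is the surjectivity of $\mu_{\omega_A,\omega_X|_A}$. Since $A$ is integral and different from $\mathbb{P}^1$, I invoke Lemma \ref{lem1.2} with $R := \omega_X|_A = \omega_A(D)$, where $D = A\cap B$. By Lemma \ref{3conn} the curve $X$ is $3$-connected, so $\deg D \geq 3 \geq 2$, and Lemma \ref{a2} guarantees that $\omega_A(D)$ is globally generated. The induced map $h_R$ is birational onto its image, because it is the restriction to $A$ of the canonical embedding of $X$. Theorem \ref{teorema} then gives quadratic normality.

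For $k=3$ I would apply Proposition \ref{kNormGen}. Hypothesis (i) is supplied by the previous step. Hypothesis (iii), the $j$-normal generation of $\omega_X|_B$ for $1\leq j\leq 3$, follows from Lemma \ref{Bnormgen}, which only requires $A,B$ nonempty and the relevant subcurve connected, so the relaxed assumption on $B$ causes no difficulty. Hypothesis (ii) reduces for $j=1$ to the previous step and, for $j\geq 2$, follows from Corollary \ref{hyp(i)} applied with $E=D$: we have $D\subset A_{\rm sm}$ by assumption and $\deg D \geq 3 \geq 2$, while the premises on $\omega_A$ (very ampleness and surjectivity of $\mu_{\omega_A}$) hold because $A$ is integral and hence its canonical embedding is projectively normal in the classical sense, exactly as invoked in Corollary \ref{cor2comp}. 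Once $k=3$ is handled, the induction step for all $k\geq 4$ is immediate from Proposition \ref{k>3}, since $\omega_X$ is very ample and therefore globally generated.

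The main obstacle, and the only point where one reaches outside the lemmas proved in the paper, is precisely the same as in Corollary \ref{cor2comp}: the verification that $\omega_A$ is very ample on the (possibly singular) integral Gorenstein curve $A$ and that $A$ is canonically projectively normal, which is what allows Corollary \ref{hyp(i)} to be applied to obtain hypothesis (ii) of Proposition \ref{kNormGen}. Apart from this, the argument is a routine translation of the proof of Corollary \ref{cor2comp}.
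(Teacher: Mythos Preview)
Your treatment of $k=2$ and of $k\ge 4$ agrees with the paper's. The divergence, and the gap, is at $k=3$. To verify hypothesis~(ii) of Proposition~\ref{kNormGen} you appeal to Corollary~\ref{hyp(i)}, which demands that $\omega_A$ be very ample and that $\mu_{\omega_A}$ be surjective. These conditions do \emph{not} follow from the hypotheses of the corollary: $A$ is only assumed to be integral and $\neq\mathbb{P}^1$, so $A$ may well be an elliptic curve (where $\omega_A\cong\mathcal{O}_A$ is not even ample) or a hyperelliptic curve (where $\omega_A$ is not very ample). Your claim that ``$A$ is integral and hence its canonical embedding is projectively normal'' presupposes exactly what is in doubt, namely that $A$ has a canonical embedding at all. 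You flag this as an obstacle inherited from Corollary~\ref{cor2comp}, but in the present corollary it is not a loose end---it is a case in which your argument simply fails.

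The paper's proof avoids this entirely by reusing Lemma~\ref{lem1.2}, not Corollary~\ref{hyp(i)}, for hypothesis~(ii). One takes $R=\omega_X^{\,j}\vert_A=\omega_A^{\,j}(jD)$ for each $j$; since $\omega_X$ is very ample on $X$, its restriction $\omega_X\vert_A$ is very ample on $A$, hence so is every power, and the induced map $h_R$ is an embedding, in particular birational onto its image. Lemma~\ref{lem1.2} then gives the surjectivity of $\mu_{\omega_A,\,\omega_X^{\,j}\vert_A}$ for every $j\ge 1$ with no hypothesis whatsoever on $\omega_A$ itself. This is both simpler and strictly more general than routing through Corollary~\ref{hyp(i)}.
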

\begin{proof}
The proof is straightforward once we notice that we can apply \ref{lem1.2} to $A$ and by Theorem \ref{teorema} we get quadratic normality of $X$; for $k=3$ we apply \ref{kNormGen} since both \ref{Bnormgen} for $B$ and \ref{lem1.2} for $A$ hold, and when $k\geq 4$ we apply \ref{k>3}.
\end{proof}

\begin{Corollary}
Let $X$ be a connected reduced Gorenstein curve with $\omega_X$ very ample and with planar singularities. Assume that $X=A\cup B$ with $A$ as in theorem \ref{teo2} and let $B$ be a connected curve.
Let $A$ and $B$ be smooth at $A\cap B$. Then $X$ canonically embedded is projectively normal.
\end{Corollary}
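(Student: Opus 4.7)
The plan is to assemble this from the machinery already built up in the paper: linear normality is automatic from the canonical embedding, and for each $k \geq 2$ we invoke a suitable normal generation result, reducing everything to the surjectivity of certain multiplication maps on $A$ that follow from Theorem \ref{teo2} and Corollary \ref{hyp(i)}.

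First I would note that since $X$ is canonically embedded, the $k=1$ map is tautologically an isomorphism, so linear normality is free. The main case is quadratic normality. Here I would apply Theorem \ref{teorema} with the given decomposition $X = A\cup B$. Its only nontrivial hypothesis is the surjectivity of $\mu_{\omega_A,\,\omega_X|_A}$. By Remark \ref{properties}(iii), $\omega_X|_A = \omega_A(D)$ where $D := A \cap B$, and since $A$ is smooth at $D$ we have $D \subset A_{\rm sm}$. Lemma \ref{3conn} gives $\deg D \ge 3 \ge 2$. Thus Theorem \ref{teo2}, applied with $E = D$ (whose hypotheses are precisely the ``$A$ as in Theorem \ref{teo2}'' assumption together with $D \subset A_{\rm sm}$ of degree $\ge 2$), delivers the required surjectivity. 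Theorem \ref{teorema} then yields quadratic normality of $X$.

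Next I would handle $k = 3$ via Proposition \ref{kNormGen}. Hypothesis (i) is the quadratic normality just established. Hypothesis (iii) — that $\omega_X|_B$ is $j$-normally generated for $1 \le j \le 3$ — is immediate from Lemma \ref{Bnormgen}, which gives normal generation outright. For hypothesis (ii), one needs the surjectivity of $\mu_{\omega_A,\,\omega_X^j|_A}$ for $j=1,2,3$. The case $j=1$ is what was used for quadratic normality. For $j=2,3$, observe that $\omega_X^j|_A = \omega_A^j(jD)$, so Corollary \ref{hyp(i)}, applied with $E = D$ (admissible since $A$ satisfies the hypotheses of Theorem \ref{teo2} and $\deg D \ge 3 \ge 2$), gives exactly what is needed. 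Thus Proposition \ref{kNormGen} provides $3$-normal generation of $\omega_X$.

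Finally, for $k \ge 4$ I would use Proposition \ref{k>3} as the induction step: since $\omega_X$ is very ample (hence globally generated) and $(k-1)$-normally generated by induction, it is $k$-normally generated. Chaining these steps together proves that $\omega_X$ is $k$-normally generated for every $k \ge 1$, i.e.\ the canonical model of $X$ is projectively normal. I do not expect a real obstacle: the only mildly delicate point is verifying that the hypotheses of Theorem \ref{teo2} and Corollary \ref{hyp(i)} transfer cleanly (namely that $D$ lies in $A_{\rm sm}$ with $\deg D \ge 2$), and this is precisely where the smoothness assumption at $A\cap B$ and the $3$-connectedness from Lemma \ref{3conn} enter.
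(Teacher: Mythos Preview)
Your proposal is correct and follows essentially the same route as the paper: use Theorem~\ref{teo2} (in place of Lemma~\ref{lem1.2}) to feed Theorem~\ref{teorema} for quadratic normality, then Proposition~\ref{kNormGen} with Lemma~\ref{Bnormgen} and Corollary~\ref{hyp(i)} for $k=3$, and Proposition~\ref{k>3} for $k\ge 4$. The paper's proof is simply the one-line remark that the argument of Corollary~\ref{cor3connessa} goes through verbatim once Theorem~\ref{teo2} replaces Lemma~\ref{lem1.2}; you have spelled out exactly those details.
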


\begin{proof}
The proof is as in corollary \ref{cor3connessa}, we just apply theorem \ref{teo2} to $A$.
\end{proof}

We give now an example; before doing this, we recall an important result from \cite{Schreyer}:

\begin{Theorem}[Schreyer]\label{schreyer}
Let $X\subset\mathbb P^{g-1}$ be a canonical curve of genus $g$. If $X$ has a simple $(g-2)$-secant, then $X$ is projectively normal.
\end{Theorem}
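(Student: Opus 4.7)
The plan is to extract a base-point-free pencil of degree $g$ from the simple $(g-2)$-secant and use it to control every multiplication map involving $\omega_X$. Set $D=p_1+\cdots+p_{g-2}$, $\Lambda=\langle D\rangle\cong\P^{g-3}$, and $M:=\omega_X(-D)$. Since the $p_i$ span $\Lambda$ they impose independent conditions on $|\omega_X|$, so $h^0(M)=2$; and the scheme-theoretic equality $X\cap\Lambda=D$ together with smoothness of $X$ at the $p_i$ forces $M$ to be globally generated, yielding a morphism $\varphi_M:X\to\P^1$ of degree $g$. Serre duality (via Lemma \ref{a1}) then gives $h^0(\mathcal O_X(D))=1$.

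Next I would prove, for every $k\geq 2$, that $\mu_{\omega_X^{k-1},M}$ is surjective. For $k\geq 3$ this is Lemma \ref{lemmino} applied with $L=\omega_X^{k-1}$, because $H^1(L\otimes M^{\vee})=H^1(\omega_X^{k-2}(D))=0$ by Serre duality. For $k=2$ the relevant $H^1$ is $2$-dimensional, so Lemma \ref{lemmino} does not apply directly; instead I would invoke the Koszul sequence
$$0\to\omega_X\otimes M^{\vee}\to H^0(M)\otimes\omega_X\to\omega_X\otimes M\to 0,$$
which gives $\ker(\mu_{\omega_X,M})\cong H^0(\mathcal O_X(D))$ of dimension one, and a Riemann--Roch dimension check then shows that the image fills all of $H^0(\omega_X^2(-D))$.

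Then I would bootstrap from $\mu_{\omega_X^{k-1},M}$ to $\mu_{\omega_X^{k-1},\omega_X}$. The inclusion $M\hookrightarrow\omega_X$ induced by a defining section of $D$ shows $\Ima(\mu_{\omega_X^{k-1},M})\subseteq\Ima(\mu_{\omega_X^{k-1},\omega_X})$, and by the previous step this already equals the codimension-$(g-2)$ subspace $H^0(\omega_X^k(-D))\subset H^0(\omega_X^k)$. To fill the quotient $(\omega_X^k)|_D\cong k^{g-2}$, for each $i$ I would use independent conditions plus very ampleness of $\omega_X$ to produce $\tau_i\in H^0(\omega_X)$ vanishing on $\{p_j:j\neq i\}$ with $\tau_i(p_i)\neq 0$, and global generation of $\omega_X^{k-1}$ to produce $\sigma_i\in H^0(\omega_X^{k-1})$ with $\sigma_i(p_i)\neq 0$. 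The products $\sigma_i\tau_i$ evaluate to a diagonal pattern on $D$ and therefore span the quotient. A trivial induction on $k$ (base case $k=1$ obvious) then promotes the surjectivity of $\mu_{\omega_X^{k-1},\omega_X}$ for every $k\geq 2$ to the surjectivity of $\mathrm{Sym}^k H^0(\omega_X)\to H^0(\omega_X^k)$, giving projective normality.

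The main obstacle I foresee is Step 1 for a general reducible Gorenstein $X$: one must carefully derive base-point-freeness of $M$ from the scheme-theoretic hypothesis $X\cap\Lambda=D$, verifying locally at each $p_i$ that the intersection has length one (so $p_i$ is not a base point of $M$) and globally that no smooth point of $X\setminus D$ lies on every hyperplane through $\Lambda$. Once this is settled, the rest is a clean combination of Lemma \ref{lemmino}, the base-point-free pencil trick, and Riemann--Roch, all of which extend to the Gorenstein setting via the duality invoked in Lemma \ref{a1}.
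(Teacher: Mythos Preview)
The paper does not prove this theorem: it is quoted verbatim from \cite{Schreyer} as an external input (introduced with ``we recall an important result from \cite{Schreyer}''), and no argument is given. So there is no proof in the paper to compare your attempt against.

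For what it is worth, your outline is essentially Schreyer's own strategy: extract from the simple $(g-2)$-secant the base-point-free pencil $M=\omega_X(-D)$ and use the base-point-free pencil trick to control the multiplication maps. Your identification of the delicate point---verifying that $M$ is genuinely base-point-free on a possibly reducible Gorenstein $X$ from the scheme-theoretic condition $X\cap\Lambda=D$---is exactly right. One further spot that deserves the same care in the reducible setting is the $k=2$ dimension count: you implicitly use $h^1(\omega_X\otimes M)=h^0(M^{\vee})=0$ to conclude $h^0(\omega_X^2(-D))=2g-1$, and on a reducible curve a negative total degree does not by itself force $H^0$ to vanish; this has to be argued, e.g.\ via the connectedness and numerical constraints coming from $\omega_X$ being very ample. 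Similarly, for $k\ge 3$ the vanishing $H^1(\omega_X^{k-2}(D))=0$ is immediate from Lemma~\ref{a1} only for $k=3$; for larger $k$ you should phrase it as Serre duality $h^1(\omega_X^{k-2}(D))=h^0(\omega_X^{3-k}(-D))$ and justify the vanishing of the right-hand side on the reducible curve.
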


Schreyer's theorem can be used in the most general setting once one is able to verify the existence of a simple $(g-2)$-secant. In \cite{Schreyer}{pp.86} gave an example of a reducible canonically embedded curve admitting no simple $(g-2)$-secant. In the following example we show that our theorem  applies to that case.
\begin{Example}\label{ex2comp}
{\rm Let $X=X_1\cup X_2\cup X_3\cup X_4$, with $X_i$ smooth of genus $g_i$ and such that the components intersect in $6$ distinct points $p_{ij}=X_i\cap X_j$ that are ordinary nodes for $X$. Then $X$ has genus $g=g_1+g_2+g_3+g_4+3$. We have that $\omega_X$ is a very ample line bundle; if $g_i=0$ for every $i$ we have a graph curve, and it is projectively normal, as we see in \cite{Graph}. Hence we can assume $g_i\neq 0$ for some $i$, say $g_1>0$. Set $A:=X_1$, $B:=X_2\cup X_3\cup X_4$. Since $A\neq\mathbb P^1$ we can apply \ref{lem1.2} and get that the multiplication map $\mu_{\omega_A,{\omega_X}|_A}$ is surjective. Sincethe conditions on the degree of ${\omega_X}|_B$ in \ref{fran} are satisfied,  the map $\mu_{{\omega_X}|_B}$ is surjective and we can apply proposition \ref{teo1} and get that $X$ is quadratically normal.}
\end{Example}

\end{document}